 \numberwithin{equation}{section}
\newcommand{\xkh}[1]{\left(#1\right)}
\newcommand{\dkh}[1]{\left\{#1\right\}}
\newcommand{\nj}[1]{\langle {#1} \rangle}
\newcommand{\innerp}[1]{\langle {#1} \rangle}
\newcommand{\norm}[1]{\|{#1}\|}
\newcommand{\norms}[1]{\|{#1}\|}
\newcommand{\abs}[1]{\lvert#1\rvert}
\newcommand{\argmin}[1]{\mathop{\rm argmin}\limits_{#1}}
\newcommand{\s}{{\mathrm s}}
\newcommand{\R}{{\mathbb R}}
\newcommand{\HH}{{\mathbb H}}
\newcommand{\KK}{{\mathcal K}}
\newcommand{\T}{\top}
\newcommand{\C}{{\mathbb C}}
\newcommand{\x}{{\widehat{\bm{x}}}}
\newcommand{\dist}{{\rm dist}}
\newcommand{\vx}{{\bm x}}
\newcommand{\vy}{{\bm y}}
\newcommand{\vu}{{\bm u}}
\newcommand{\vz}{{\bm z}}
\newcommand{\vb}{{\bm b}}
\newcommand{\ve}{{\bm e}}
\newcommand{\bepsilon}{{\mathbf \epsilon}}
\newcommand{\va}{{\bm a}}
\newcommand{\tx}{{\widetilde{\bm{x}}}}
\newcommand{\tA}{{\widetilde{A}}}
\newcommand{\rank}{{\rm rank}}
\newcommand{\conv}{{\rm conv}}
\newcommand{\spann}{{\rm span}}
\renewcommand{\omega}{\eta}
\newcommand{\RNum}[1]{\uppercase\expandafter{\romannumeral #1\relax}}
\newtheorem{definition}{Definition}[section]
\newtheorem{cor}[definition]{Corollary}
\newtheorem{theorem}[definition]{Theorem}
\newtheorem{conjecture}[definition]{Conjecture}
\newtheorem{lemma}[definition]{Lemma}
\newtheorem{example}[definition]{Example}
\date{}
\begin{document}
\baselineskip 18pt
\bibliographystyle{plain}
\title[uniqueness and stability of solutions]{ Uniqueness and stability for the solution of a nonlinear least squares problem}
\author{Meng Huang}
\thanks{*****}
\address{Department of Mathematics, The Hong Kong University of Science and Technology, Clear
Water Bay, Kowloon, Hong Kong, China}\email{menghuang@ust.hk}

\author{Zhiqiang Xu}
\thanks{Zhiqiang Xu was supported  by  NSFC grant (12025108), by
 by  Beijing Natural Science Foundation (Z180002) and by NSFC grant (12021001).}
\address{LSEC, Inst.~Comp.~Math., Academy of
Mathematics and System Science,  Chinese Academy of Sciences, Beijing, 100091, China
\newline
School of Mathematical Sciences, University of Chinese Academy of Sciences, Beijing
100049, China} \email{xuzq@lsec.cc.ac.cn}
\maketitle

\begin{abstract}
In this paper, we focus on the nonlinear least squares: $\min_{\vx\in \HH^d
}\|\abs{A\vx}-\vb\|$ where $A\in \HH^{m\times d}$, $\vb \in \R^m$ with $\HH\in
\dkh{\R,\C}$ and consider the uniqueness and stability of solutions. Such problem
arises, for instance,   in phase retrieval and absolute value rectification neural
networks.
For the case where $\vb=\abs{A\vx_0}$ for some $\vx_0\in \HH^d$,  many results have been  developed to characterize the uniqueness and stability of solutions.
However,  for
the case where $\vb\neq \abs{A\vx_0}$ for any $\vx_0\in \HH^d$, there is no existing
result for it to the best of our knowledge.
 In this paper, we first focus on the uniqueness of solutions and show for any matrix $A\in \HH^{m \times d}$ there always exists a vector $\vb\in \R^m$  such that the solution is not unique.
 But, in real case, such ``bad'' vectors $\vb$ are negligible, namely, if $\vb\in \R_{+}^m$ does not lie in some measure zero set, then the solution is unique.
We also present some conditions under which  the solution is  unique. For the
stability of solutions, we prove that the solution is never uniformly stable.
 But if we restrict the vectors $\vb$ to any convex set then it is stable.
 \end{abstract}

\section{introduction}

\subsection{Problem setup}
Assume that $A:=[\va_1,\ldots,\va_m]^* \in \HH^{m\times d}$ and
$\vb:=[b_1,\ldots,b_m]^*\in \R^m$ where $\HH\in \{\R,\C\}$. We are interested in the
following program
\begin{equation}\label{eq:least}
\Phi_A(\vb)\,\,:=\,\,\argmin{\vx\in \HH^d} \quad \|\abs{A\vx}-\vb\|^2,
\end{equation}
where $\abs{\cdot}$ is understood to act entrywise. Such model has a rich history in
statistics and is   widely used in phase retrieval (see
\cite{wang2017solving,zhang2016reshaped,
fienup1982phase,netrapalli2015phase,gerchberg1972practical}) and  deep learning
\cite{jarrett2009best, goodfellow2016deep}. Although one has developed many
algorithms to solve (\ref{eq:least}), especially in the randomized setting (meaning
that the matrix $A$ is drawn at random),
 there are very few results about the properties of the program, such as the uniqueness and stability of the solution.

For convenience, we set
\[
\KK_A:=\{\vy=\abs{A\vx}\in \R^m : \vx\in \HH^d\}
\]
and call $\KK_A$ as a {\em phaseless surface} corresponding to $A$.
 When $\vb\in \KK_A$, i.e., $\vb=\abs{A\vx_0}$ for some $\vx_0\in \HH^d$,
 the  recovery of $\vx_0$ from the phaseless observation vector $\vb$ is known as
{\em phase retrieval}. For this case where  $\vb\in \KK_A$, there are
many results for the uniqueness and stability of the solution to (\ref{eq:least}).
For instance, if $m\geq 2d-1$ (resp. $m\geq 4d-4$) then the generic matrix $A\in
\R^{m\times d}$ (resp. $A\in \C^{m\times d}$) suffices to guarantee the uniqueness of  the solution to (\ref{eq:least}) (see e.g. \cite{phase1,phase2,phase3});  moreover, for $\vb\in \KK_A$, the
solution to (\ref{eq:least}) is  always stable for any fixed $d$ (i.e.,  finite-dimensional Hilbert space)
\cite{bandeira2014saving, phase} while it is unstable in any infinite-dimensional
Hilbert space \cite{Rima2017,Jameson}. However, in the {\em noisy phase retrieval },
we often encounter the case where $\vb\notin \KK_A$. In this setting, to our
knowledge,  there is no result concerning the uniqueness and stability of
solutions. Naturally, one may be interested in whether the solution to
(\ref{eq:least}) is unique or stable for any $\vb\in \R^m$, which is the topic  of
this paper.

As said before,  the aim of this paper is to address the uniqueness  and stability of solutions of
the nonlinear least squares problem (\ref{eq:least}). Particularly, we are interested in  the following questions:
\begin{enumerate}[{\rm Question~} I]
\item {\em (Uniqueness of solutions)} Does  there exist a matrix $A\in
    \HH^{m\times d}$ so that the solution to (\ref{eq:least}) is unique up to a
    unimodular constant for {\em all}  the vectors $\vb\in \R^m$?
\item {\em (Conditions for uniqueness)} For which vector $\vb\in \R^m$,  the solution
    to (\ref{eq:least}) is unique?
\item {\em (Stability of solutions)} Is there a matrix $A\in \HH^{m\times
    d}$ and a constant $c$ only depending on $A$ so that
    \[
    \min_{\vx\in \Phi_A{(\vb_1)}, \vy\in \Phi_A{(\vb_2)}}\|\vx-\vy\|\leq c \|\vb_1-\vb_2\|
    \]
    holds for all $\vb_1,\vb_2\in \R^m$?
\end{enumerate}

Note that  if $\vx \in \HH^d$ is a solution to (\ref{eq:least}) then $c \vx$ is also
a solution to (\ref{eq:least}) for any unimodular constant $c$.  Thus,  we say $\vx
\sim \vy$ if $\vx=c\vy $ for some unimodular constant $c$. Let
$\underline{\HH^d}:=\HH^d/\sim$. We shall use $\underline{\vx}$ to denote the
equivalent class containing $\vx$. We say that the solution to (\ref{eq:least}) is
unique if $\Phi_A(\vb)$ only contains one element in $\underline{\HH^d}$. The
distance between $\underline{\vx}$ and $\underline{\vy}$ is defined as
$\|\underline{\vx}-\underline{\vy}\|:=\min_{c\in \HH,\abs{c}=1}\|\vx-c\vy\|$.

 \subsection{Related work}
 \subsubsection{Phase retrieval}

 The most related example to  (\ref{eq:least}) is {\em phase retrieval},
  which aims to recover the signals from the magnitudes of measurements.
The phase retrieval problem arises in many areas, such as   X-ray crystallography
  \cite{harrison1993phase, millane1990phase}, optics \cite{walther1963question},
  astronomical imaging \cite{fienup1987phase}, diffraction imaging \cite{bunk2007diffractive},
  and microscopy \cite{miao2008extending}. In these areas, the phase information of an
  object is lost due to physical limitations of scientific instruments.
  More specifically, suppose that a signal $\vx_0 \in \HH^d$ is measured
   via measurement vectors $\va_i\in \HH^d$ to obtain $b_i=\abs{\nj{\va_i,\vx_0}}, \; i=1,\ldots,m$.
 The phase retrieval problem aims to recover the
 signal $\vx_0$ based on measurement matrix $A:=[\va_1,\ldots,\va_m]^* \in \HH^{m\times d}$ and
 vector  $ \vb:=[b_1,\ldots,b_m]^*\in \R^m$. A natural approach to reconstruct
  $\vx_0$ is to employ  (\ref{eq:least}). Many efficient algorithms have been proposed
  for solving (\ref{eq:least}) with the proviso that $A$ is a Gaussian random matrix, such as Truncated Amplitude Flow \cite{wang2017solving},
 Reshaped Wirtinger Flow \cite{zhang2016reshaped},   Perturbed Amplitude Flow \cite{Gao20}
  and Smoothed Amplitude Flow \cite{Huang2021}.

   We say  a   matrix $A\in \HH^{m\times d}$ has {\em phase retrieval property }
    if one can recover any
   $\underline{\vx_0}\in \underline{\HH}^d$  from $\vb=\abs{A\vx_0}\in \KK_A$. For the real case,
 the matrix $A\in \R^{m\times d}$ has phase retrieval property if and only if $A$ satisfies
 the complement property \cite{bandeira2014saving},  which implies $m\geq 2d-1$ generic
      vectors of $\R^d$ are sufficient to have phase retrieval property.
 For the complex case, Balan, Casazza and Edidin in \cite{phase1} show that
  $A\in \C^{m\times d}$ has phase retrieval property if $m\ge 4d-2$ and $\va_1,\ldots,\va_m$
  are generic vectors in $\C^d$. Lately, Bandeira, Cahill, Mixon and Nelson improve this
  result to $m\ge 4d-4$ generic vectors \cite{phase2}.

Recently, the phase retrieval problem under a generative  prior is studied in
\cite{hand2018phase, shamshad2020compressed} and is termed as {\em deep phase
retrieval}. In such setting, the signal of interest is the output of a generative
model which is a $n$-layer, fully-connected, feed forward neural network with
Rectifying Linear Unit (ReLU) activation functions and no bias terms. To recover the
signal, they consider the empirical risk minimization problem:
\begin{equation} \label{mo:deepPR}
\min_{\vx \in \R^d} \quad \norm{\abs{AG(\vx)}- \abs{AG(\vx_0)}}^2,
\end{equation}
where $G(\vx):=\mbox{ReLU}(W_n\cdots\mbox{ReLU}(W_2 (\mbox{ReLU}(W_1 \vx))))$ with
the weights $W_i \in \R^{k_i \times k_{i-1}}$  and $\mbox{ReLU}(z)=\max(0,z)$. The
results of \cite{hand2018phase, shamshad2020compressed}  show that the objective
function of (\ref{mo:deepPR}) exhibits favorable geometry landscape and does not have
any spurious local minima away from neighborhoods of the true solutions  provided
$A\in \R^{m\times k_n}$ is Gaussian random matrix and $m=\Omega(dn\log(k_1\cdots
k_n))$. A simple observation is that (\ref{mo:deepPR}) has a unique solution up to a
unimodular constant if $A$ has phase retrieval property. That is another  example of
program (\ref{eq:least}) which combine  phase retrieval and deep learning.

\subsubsection{Shallow neural networks}
Another example related to the program (\ref{eq:least}) is shallow neural networks with
{\em absolute value rectification}.  More specifically, given training data
$\dkh{(\va_i,b_i) }_{i=1}^m \in \R^{d}\times \R$, we consider a neural network with
zero hidden unit and a single output with absolute value activation to fit the data.
A natural approach is to minimize the least squares misfit aggregated over the data,
which is in the form of (\ref{eq:least}) exactly. {\em Absolute value rectification}
$g(z)=\abs{z}$  is a generalization of ReLU units. Since the slope is non-zero when
$z$ is negative, it can be used to avoid the dead ReLU problem. Fitting the data with
{\em absolute value rectification}  has several advantages over others activation
functions when taking into account the (sign) symmetry of features
\cite{xu2016structural}. For example, for the object recognition from images, it
makes sense to use absolute value rectification to seek features that are invariant
under a polarity reversal of the input illumination \cite{goodfellow2016deep}.  We
would like to point out that there is an interesting growing literature
\cite{hazan2015beyond, kakade2011efficient, kalai2009isotron, mei2016landscape,
soltanolkotabi2017learning} on learning shallow neural networks with zero hidden unit
and a single output, most of which focus on geometric landscape analysis and the
convergence of gradient-based methods to  the global optimum under various
assumptions. To our knowledge, there is little works considering the uniqueness and
stability of solutions. Since neural networks have achieved remarkable empirical
success  \cite{collobert2008unified, goel2017reliably, oymak2020towards,
du2018gradient} while still lack of theoretical guarantees,  we believe that the
results in our paper are useful in reducing the gap.


\subsection{Our Contribution}

The aim of this paper is trying to answer  Question I, Question II and Question III.
For Question I, we  prove that for {\em arbitrary } matrix $A \in \HH^{m\times d}$
there always exists $\vb\in \R^m$ such that the solution to (\ref{eq:least}) is not
unique, which gives a negative answer for it. We then turn to Question II in the real case.
First, we show that the set of nonuniqueness vectors  $\vb$ is negligible in the nonnegative orthant, i.e.,
for all vectors $\vb\in \R_{+}^m$ except a measure zero set the solution to
(\ref{eq:least}) is unique. Recall that we use $\Phi_A(\vb)$ to denote the solutions
set to (\ref{eq:least}). We next  prove that $\# \Phi_A(\vb)$ is finite provided $A$
satisfies the phase retrieval property.  Finally, we present a sufficient condition, the vector $\vb$ is very
close to the set $\KK_A$,  under which the solution to (\ref{eq:least}) is unique.
 These explain the reason why the solution to (\ref{eq:least}) is often unique in
 many numerical experiments. Although the results only hold in the real case,
it sheds light on the relationship of the vector $\vb$ to uniqueness of the solution.

 Finally, we consider Question
III, i.e., the stability of solutions. We prove that for any $\epsilon>0$ there
always exist $\vb_1, \vb_2\in \R^m$ so that $\mbox{dist}(\Phi_A(\vb_1),\Phi_A(\vb_2))
\ge \norm{\vb_1-\vb_2}/\epsilon$, which means the solution to (\ref{eq:least}) is
never uniformly stable. But if we restrict the vector $\vb$ to some convex sets, then
the solution to (\ref{eq:least}) is stable.


\subsection{Organization}
The paper is organized as follows.
In Section 2, we introduce some notations and lemmas which are useful in this
paper. In Section 3, we present a negative result for Question I and show the
solution to (\ref{eq:least}) is not unique for some vectors $\vb$.  Section 4 is devoted to
establishing several uniqueness results under some appropriate conditions, which gives a
positive answer to Question II. Finally,  Section 5 is concerned with the stability of solutions to  (\ref{eq:least}),
which gives the answers to Question III.

\section{Preliminaries}
In this section, we  introduce a few notations  and lemmas  that will be used in our paper.

\subsection{The best approximation and Chebyshev sets}
Assume that $K\subset \R^m$ is nonempty. For any fixed $\vb\in \R^m$, if $\vy^\#\in \R^m$ satisfies
\[
\|\vy^\#-\vb\|={\min}_{\vy\in K}\|\vy-\vb\|
\]
 then $\vy^\#$ is called a {\em best approximation} to $\vb$ from $K$ and $d(K, \vb):=\|\vy^\#-\vb\|$ is called the distance from $\vb$ to $K$.
  We use $P_K(\vb)$ to denote the set of all best approximations to $\vb$ from $K$. In the context of the best approximation
theory, $K$ is called a {\em Chebyshev set} if each $\vb \in \R^m$ has a unique best
approximation in $K$ (see \cite{best}). The following lemma presents
 a characterization of Chebyshev set in finite-dimensional Hilbert space.
\begin{lemma} (\cite[Theorem 12.7]{best} )\label{th:best}
Assume that $K$ is a nonempty subset of $\R^m$. Then $K$ is a Chebyshev set if and
only if $K$ is closed and convex.
\end{lemma}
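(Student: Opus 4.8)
The statement is a two-way characterization, so the plan is to prove each implication separately, with the reverse implication (Chebyshev $\Rightarrow$ closed and convex) being by far the harder one.

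For the forward direction, assume $K$ is closed and convex and fix $\vb\in\R^m$. \emph{Existence} of a best approximation follows from a compactness argument: choose any $\vu\in K$, intersect $K$ with the closed ball of radius $\|\vu-\vb\|$ about $\vb$ to obtain a nonempty compact set, and minimize the continuous map $\vy\mapsto\|\vy-\vb\|$ over it. \emph{Uniqueness} is where convexity enters, through the strict convexity of the Euclidean norm: if $\vu,\vv\in K$ both attain the minimal distance $\delta=d(K,\vb)$, then $(\vu+\vv)/2\in K$ by convexity, and the parallelogram law gives
\[
\Big\|\tfrac{\vu+\vv}{2}-\vb\Big\|^2=\tfrac12\|\vu-\vb\|^2+\tfrac12\|\vv-\vb\|^2-\tfrac14\|\vu-\vv\|^2=\delta^2-\tfrac14\|\vu-\vv\|^2 .
\]
Since the left-hand side is at least $\delta^2$, we must have $\vu=\vv$, so every $\vb$ has a unique best approximation and $K$ is Chebyshev.

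For the reverse direction, \emph{closedness} is immediate: were $\vb$ a limit point of $K$ with $\vb\notin K$, then $d(K,\vb)=0$ while no point of $K$ attains it, contradicting existence of a best approximation. The substantive content is \emph{convexity} (the Motzkin--Bunt theorem), for which finite-dimensionality is essential, and I would proceed in two stages. First, the metric projection $P_K$ is \emph{continuous}: if $\vb_n\to\vb$ then $\{P_K(\vb_n)\}$ is bounded since $\|P_K(\vb_n)-\vb_n\|=d(K,\vb_n)$ stays near $d(K,\vb)$; any subsequential limit lies in the closed set $K$ and realizes $d(K,\vb)$ by continuity of the distance function, hence equals the unique point $P_K(\vb)$, so the whole sequence converges. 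Second, I would show $K$ is a \emph{sun}: for $\vb\notin K$ with $\vy^\#=P_K(\vb)$, every point $\vz_t:=\vy^\#+t(\vb-\vy^\#)$ of the ray through $\vb$ satisfies $P_K(\vz_t)=\vy^\#$ for all $t\ge0$. The inward part $t\in[0,1]$ is handled by a nested-ball observation: if $P_K(\vz_{t_0})=\vy^\#$, then for $t\le t_0$ the closed ball $B(\vz_t,\|\vz_t-\vy^\#\|)$ is contained in $B(\vz_{t_0},\|\vz_{t_0}-\vy^\#\|)$ and so meets $K$ only at the common boundary point $\vy^\#$. Extending the identity \emph{outward} to all $t\ge1$ is the delicate step, carried out by a connectedness argument on $S:=\{t\ge0:P_K(\vz_t)=\vy^\#\}$, which is nonempty, closed by continuity of $P_K$, and shown to be unbounded. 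Finally, the sun property together with continuity of $P_K$ forces $K$ to be convex.

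The main obstacle is clearly this convexity step. The two delicate points are the outward extension in the sun property and the concluding passage from ``Chebyshev sun with continuous projection'' to genuine convexity; both rely on the finite-dimensionality of $\R^m$ (through compactness, and typically a Brouwer-degree or monotone-operator argument) and fail in general infinite-dimensional settings. Since the lemma is quoted verbatim as \cite[Theorem 12.7]{best}, in practice I would invoke that reference for the hard direction rather than reconstruct the full Motzkin--Bunt proof, supplying in detail only the straightforward forward direction and the continuity of $P_K$.
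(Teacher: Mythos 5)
The paper gives no proof of this lemma at all: it is imported verbatim as Theorem 12.7 of \cite{best}, so there is nothing internal to compare your argument against, and your closing decision to invoke that reference for the hard direction is exactly what the authors do. That said, your outline is correct and is the standard Motzkin--Bunt route. Your forward direction is complete as written: existence by intersecting $K$ with a closed ball (compact in $\R^m$) and uniqueness by the parallelogram identity, which yields $\vu=\vv$ whenever both attain the minimal distance. Your reverse direction has the right architecture (closedness is immediate; continuity of $P_K$ by the subsequence argument; then the sun property; then convexity), with one misallocation of where the difficulty sits: the concluding passage from ``sun'' to ``convex'' does not need Brouwer degree or monotone operators --- in an inner product space a sun satisfies the Kolmogorov criterion $\langle \vb - P_K(\vb),\, \vy - P_K(\vb)\rangle \le 0$ for all $\vy \in K$, and if $\vu,\vv\in K$ had midpoint $\vv_0\notin K$ with projection $\vy^\#$, summing the two inequalities at $\vy=\vu$ and $\vy=\vv$ gives $2\|\vv_0-\vy^\#\|^2\le 0$, a contradiction, so this step is elementary. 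The genuinely hard finite-dimensional input (a Brouwer fixed-point argument, as in Chapter 12 of \cite{best}) is concentrated entirely in the outward extension establishing the sun property, which you correctly flag as delicate and do not attempt to reconstruct. With that one correction your sketch faithfully reproduces the cited proof, and it supplies more detail than the paper itself, which supplies none.
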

The next lemma states that the distance function is nonexpansive for any nonempty set.
\begin{lemma}\label{le:dist}\cite[Theorem 5.3]{best}
Assume that $K\subset \R^m$ is a nonempty set. Then for every pair $\vb,\vb' \in
\R^m$,
\[
\abs{d(K,\vb)-d(K,\vb')}\leq \|\vb-\vb' \|.
\]
\end{lemma}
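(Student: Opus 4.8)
The plan is to derive the inequality directly from the triangle inequality, exploiting that $d(K,\cdot)$ is an infimum over $K$; this is precisely the standard nonexpansiveness (one-Lipschitz) property of a distance function. Recall $d(K,\vb)=\inf_{\vy\in K}\norm{\vy-\vb}$. First I would fix an arbitrary point $\vy\in K$ and write, by the triangle inequality in $\R^m$,
\[
\norm{\vy-\vb}\;\le\;\norm{\vy-\vb'}+\norm{\vb'-\vb}.
\]
Since $d(K,\vb)\le\norm{\vy-\vb}$ for this particular $\vy$, combining the two estimates gives $d(K,\vb)\le\norm{\vy-\vb'}+\norm{\vb-\vb'}$.

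Next, because the bound $\norm{\vy-\vb'}+\norm{\vb-\vb'}$ holds for every $\vy\in K$ while the left-hand side $d(K,\vb)$ no longer depends on $\vy$, I would take the infimum over $\vy\in K$ on the right. This yields
\[
d(K,\vb)\;\le\;\inf_{\vy\in K}\norm{\vy-\vb'}+\norm{\vb-\vb'}\;=\;d(K,\vb')+\norm{\vb-\vb'},
\]
that is, $d(K,\vb)-d(K,\vb')\le\norm{\vb-\vb'}$. Swapping the roles of $\vb$ and $\vb'$ in the identical argument gives $d(K,\vb')-d(K,\vb)\le\norm{\vb-\vb'}$, and the two bounds together produce the claimed $\abs{d(K,\vb)-d(K,\vb')}\le\norm{\vb-\vb'}$.

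There is essentially no serious obstacle here; the only point requiring a little care is the order of operations in the infimum step — one must first bound $d(K,\vb)$ by a quantity valid for \emph{all} $\vy\in K$, and only then pass to the infimum on the right, rather than attempting to take infima on both sides simultaneously. If one prefers to work with the attained best approximation $\vy^\#$ from the definition rather than with the infimum (legitimate whenever the minimum exists, e.g. for closed $K$), the same computation goes through verbatim by taking $\vy=\vy^\#$ a minimizer for $\vb'$; in particular no hypotheses beyond nonemptiness of $K$ are needed.
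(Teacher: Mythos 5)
Your proof is correct, and it is the standard nonexpansiveness argument via the triangle inequality and passing to the infimum over $K$ — exactly the argument behind the result the paper cites from Deutsch (the paper itself gives no proof, only the citation). Your care about taking the infimum only after establishing a bound valid for all $\vy\in K$, and your remark that mere nonemptiness of $K$ suffices, are both on point.
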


The following lemma shows that the projection operator onto a Chebyshev set  is also nonexpansive.
\begin{lemma}\label{le:leq1}\cite[Theorem 12.3]{best}
Assume that $K\subset \R^m$ is closed and convex. Then
\[
\|P_K(\vb)-P_K(\vb')\|\,\,\leq \,\, \|\vb-\vb'\|\quad \text{ for all }\quad \vb,\vb'\in \R^m.
\]
\end{lemma}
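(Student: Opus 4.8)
The plan is to exploit the single-valuedness of the projection together with the variational (obtuse-angle) characterization of the best approximation onto a convex set. First I would note that since $K$ is closed and convex, Lemma \ref{th:best} guarantees that $K$ is a Chebyshev set, so $P_K(\vb)$ is a single point for every $\vb\in\R^m$; write $\vy:=P_K(\vb)$ and $\vy':=P_K(\vb')$, so that $P_K$ is a genuine map. The key intermediate fact I would establish is the criterion that a point $\vy\in K$ equals $P_K(\vb)$ if and only if
\[
\nj{\vb-\vy,\,\vz-\vy}\leq 0 \qquad\text{for all } \vz\in K.
\]

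To prove this criterion, fix $\vz\in K$. By convexity the segment $\vy+t(\vz-\vy)$ lies in $K$ for all $t\in[0,1]$, and since $\vy$ minimizes $\|\vb-\cdot\|$ over $K$, the scalar function $\phi(t):=\|\vb-\vy-t(\vz-\vy)\|^2$ satisfies $\phi(t)\geq\phi(0)$ on $[0,1]$. Expanding $\phi(t)=\|\vb-\vy\|^2-2t\,\nj{\vb-\vy,\vz-\vy}+t^2\|\vz-\vy\|^2$ and letting the difference quotient $(\phi(t)-\phi(0))/t$ tend to $0$ from the right forces $\phi'(0)\geq 0$, which gives $\nj{\vb-\vy,\vz-\vy}\leq 0$. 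For the converse I would expand $\|\vb-\vz\|^2=\|\vb-\vy\|^2-2\,\nj{\vb-\vy,\vz-\vy}+\|\vy-\vz\|^2\geq\|\vb-\vy\|^2$, so $\vy$ is indeed the closest point.

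With the criterion in hand I would apply it twice: taking $\vz=\vy'$ in the inequality for $\vb$ yields $\nj{\vb-\vy,\vy'-\vy}\leq 0$, and taking $\vz=\vy$ in the inequality for $\vb'$ yields $\nj{\vb'-\vy',\vy-\vy'}\leq 0$. Adding these (after rewriting $\vy-\vy'=-(\vy'-\vy)$ so the second arguments agree) produces
\[
\nj{(\vb-\vb')-(\vy-\vy'),\,\vy'-\vy}\leq 0,
\]
which rearranges to $\|\vy-\vy'\|^2\leq \nj{\vb-\vb',\,\vy-\vy'}$. Applying Cauchy--Schwarz to the right-hand side and dividing by $\|\vy-\vy'\|$ (the case $\vy=\vy'$ being trivial) gives $\|\vy-\vy'\|\leq\|\vb-\vb'\|$, as claimed.

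The only genuinely nontrivial ingredient is the variational characterization; once it is available, the nonexpansiveness estimate is pure algebra. The point requiring the most care is the derivative step: because $t=0$ is the \emph{endpoint} of $[0,1]$, minimality yields only the one-sided condition $\phi'(0)\geq 0$ (not interior stationarity), and it is precisely this inequality — rather than an equality — that produces the correct obtuse-angle sign and hence the contraction.
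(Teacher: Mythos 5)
Your proof is correct and complete: the variational (obtuse-angle) characterization of the projection onto a closed convex set, applied at $\vb$ and $\vb'$ and summed, followed by Cauchy--Schwarz, is exactly the standard argument, and your handling of the one-sided derivative condition $\phi'(0)\geq 0$ at the endpoint $t=0$ is done carefully. The paper itself gives no proof of this lemma --- it simply cites \cite[Theorem 12.3]{best} --- and your argument is essentially the proof found in that reference, so there is nothing to compare beyond noting agreement.
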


%
%
%
%
%
%

\subsection{Some results about phase retrieval}

As stated before,  we say  a matrix $A\in \HH^{m\times d}$ has  phase retrieval
property if any $\underline{\vx_0}\in \underline{\HH^d}$ can be recovered from
$\abs{A\vx_0}\in \R^m$.

The following lemma presents a relationship between the  solution
to (\ref{eq:least}) and the best approximation to $\vb$ from  $
\KK_A:=\{\vy=\abs{A\vx}\in \R^m : \vx\in \HH^d\} $.
\begin{lemma}\label{le:one}
Assume that $A\in \HH^{m\times d}$ has phase retrieval property.
For any vector $\vb\in \R^m$, the program (\ref{eq:least}) has a unique solution if and only if the best
approximation to $\vb$ from the  $\KK_A$ has exactly one element, i.e.,
$\#P_{\KK_A}(\vb)=1$.
\end{lemma}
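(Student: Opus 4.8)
The plan is to translate the least-squares problem (\ref{eq:least}) into a best-approximation problem on $\KK_A$ and then use the phase retrieval property to transport cardinalities between the two. The starting observation is that, as $\vx$ ranges over $\HH^d$, the vector $\abs{A\vx}$ ranges over exactly $\KK_A$, so the two optimal values coincide:
\[
\inf_{\vx\in \HH^d}\|\abs{A\vx}-\vb\|\;=\;\inf_{\vy\in \KK_A}\|\vy-\vb\|\;=\;d(\KK_A,\vb).
\]
Consequently $\vx$ minimizes $\|\abs{A\vx}-\vb\|$ if and only if $\abs{A\vx}$ attains this common infimum, i.e.\ if and only if $\abs{A\vx}\in P_{\KK_A}(\vb)$. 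Writing $\Psi(\vx):=\abs{A\vx}$, this records that $\Phi_A(\vb)=\Psi^{-1}\big(P_{\KK_A}(\vb)\big)$ and $\Psi(\Phi_A(\vb))=P_{\KK_A}(\vb)$, which holds purely from the equality of the two feasible sets.

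Next I would check that $\Psi$ descends to a well-defined bijection $\underline{\Psi}\colon \underline{\HH^d}\to \KK_A$. It is well defined on equivalence classes since $\abs{A(c\vx)}=\abs{A\vx}$ for every unimodular $c$, and it is surjective by the very definition of $\KK_A$. The crucial input is injectivity: the phase retrieval property of $A$ says precisely that $\underline{\vx}$ can be recovered from $\abs{A\vx}$, that is, $\abs{A\vx}=\abs{A\vy}$ forces $\underline{\vx}=\underline{\vy}$. Hence $\underline{\Psi}$ is a bijection, and in particular the preimage under $\Psi$ of any single point of $\KK_A$ is a single equivalence class in $\underline{\HH^d}$.

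Combining the two facts, $\underline{\Psi}$ carries the collection of equivalence classes represented in $\Phi_A(\vb)$ bijectively onto $P_{\KK_A}(\vb)$; hence $\Phi_A(\vb)$ contains exactly as many elements of $\underline{\HH^d}$ as $P_{\KK_A}(\vb)$ contains points. The lemma is then the special case of this equality at cardinality one: (\ref{eq:least}) has a unique solution, meaning $\Phi_A(\vb)$ contains a single element of $\underline{\HH^d}$, precisely when $\#P_{\KK_A}(\vb)=1$.

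The only genuinely delicate point, and the step I would handle most carefully, is making sure both sides are actually attained so that the equivalence is not vacuous; this is where I would verify that $\KK_A$ is closed. The phase retrieval property forces $A$ to be injective, since $A\vx=0$ with $\vx\ne 0$ would give $\abs{A\vx}=\abs{A\cdot 0}$ while $\underline{\vx}\ne\underline{0}$. Thus $A$ is bounded below, and because $\|\abs{A\vx}\|=\|A\vx\|$, any convergent sequence $\abs{A\vx_n}\to\vy$ keeps $\vx_n$ bounded; passing to a convergent subsequence $\vx_{n_k}\to\vx^\ast$ yields $\vy=\abs{A\vx^\ast}\in\KK_A$ by continuity. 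Hence best approximations and minimizers always exist, and once this coercivity/closedness check is in place the cardinality bookkeeping above is the entire argument, so I do not anticipate any further obstacle.
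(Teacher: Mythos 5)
Your proof is correct, and its core is the same translation the paper uses: minimizers of $\norm{\abs{A\vx}-\vb}$ correspond to best approximations in $\KK_A$, with the phase retrieval property supplying uniqueness of preimages. The packaging differs usefully, though. The paper argues the two implications separately (a strict-inequality argument when $\#P_{\KK_A}(\vb)=1$, and a contradiction with two best approximations for the converse), whereas you prove the sharper structural fact that $\vx\mapsto\abs{A\vx}$ descends to a bijection $\underline{\HH^d}\to\KK_A$ and that $\Phi_A(\vb)$ is exactly the preimage of $P_{\KK_A}(\vb)$, yielding the full cardinality identity $\#\Phi_A(\vb)=\#P_{\KK_A}(\vb)$; the lemma is then the case of cardinality one, and the identity at other cardinalities is implicitly what the paper needs later (e.g., when bounding $\#\Phi_A(\vb)$ by counting best approximations). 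Your closing step also patches something the paper leaves tacit: its proof asserts the existence of a best approximation $\vb_1\in\KK_A$ without justification, while you derive it by showing $\KK_A$ is closed (phase retrieval forces $A$ injective, hence bounded below in finite dimensions, so limits of sequences $\abs{A\vx_n}$ stay in $\KK_A$). Strictly speaking the biconditional would survive even without attainment, since then $\Phi_A(\vb)$ and $P_{\KK_A}(\vb)$ would be empty simultaneously, but your check makes the statement non-vacuous and is the honest way to license the paper's phrase ``there exists a vector $\vb_1\in\KK_A$'' in its first paragraph.
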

\begin{proof}
We assume that   the best approximation to $\vb$ from $\KK_A$ has exactly one
element. Then there exists a vector $\vb_1 \in \KK_A$ such that
\begin{equation} \label{eq:b1b2}
\norms{\vb-\vb_1} < \norms{\vb-\vb_2} \quad \mbox{for any} \quad \vb_2 \in \KK_A\setminus \{\vb_1\}.
\end{equation}
Since $A$ has phase retrieval property,  there exists a unique  $\underline{\vx_1}\in
\underline{\HH^d}$ such that $\vb_1=\abs{A\vx_1}$.  According to (\ref{eq:b1b2}), we
have
\[ \norms{\abs{A\vx_1}-\vb} < \norms{\abs{A\vx_2}-\vb} \quad \mbox{for any} \quad
\underline{\vx_2} \in \underline{\HH^d}\setminus \{\underline{\vx_1}\},
\]
which implies the solution to (\ref{eq:least}) is unique.

We next assume that  (\ref{eq:least}) has a unique solution. We will show that the
best approximation to $\vb$ from the phaseless surface $\KK_A$ contains only one
element. For the aim of contradiction, we  assume there exist two best approximations
to $\vb$ for $\KK_A$, say $\vb_1$ and $\vb_2$. Then there exist two vectors $\vx_1,
\vx_2 \in \HH^d $ with $\underline{\vx_1}\neq \underline{\vx_2}$ such that
$\vb_1=\abs{A\vx_1},\; \vb_2=\abs{A\vx_2}$ and
\[
\norms{\abs{A\vx_1}-\vb}=\norms{\abs{A\vx_2}-\vb} < \norms{\abs{A\vx}-\vb} \quad \mbox{for any}
\quad \underline{\vx} \in \underline{\HH^d}\setminus \{\underline{\vx_1},\underline{\vx_2}\},
\]
which implies (\ref{eq:least}) has two solutions $\vx_1$ and $\vx_2$.  This contradicts to the assumption.

\end{proof}

For the  case where $\HH=\R$, the matrix $A\in \R^{m\times d}$ has phase retrieval
property if and only if $A$ satisfies the complement property:

\begin{lemma}\cite{bandeira2014saving}\label{le:comp}
The matrix $A:=[\va_1,\ldots,\va_m]^\T\in \R^{m\times d}$ has phase retrieval property in $\R^d$ if and only if for every
$I\subset \{1,\ldots,m\}$, either ${\rm span}\{\va_j:j\in I\}=\R^d$ or ${\rm
span}\{\va_j:j\in I^c\}=\R^d$.
\end{lemma}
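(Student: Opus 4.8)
The plan is to unfold the definition of the phase retrieval property and reduce it to a statement about orthogonality of $\vx-\vy$ and $\vx+\vy$ against the measurement vectors. By definition $A$ has phase retrieval property exactly when the map $\vx\mapsto\abs{A\vx}$ is injective on $\underline{\Rd}$, i.e. whenever $\abs{A\vx}=\abs{A\vy}$ we must have $\vx=\pm\vy$ (recall the real equivalence relation is $\vx\sim\vy$ iff $\vx=\pm\vy$). The central observation driving both directions is that $\abs{A\vx}=\abs{A\vy}$ means $\abs{\nj{\va_i,\vx}}=\abs{\nj{\va_i,\vy}}$ for every $i$, so for each index either $\nj{\va_i,\vx}=\nj{\va_i,\vy}$ or $\nj{\va_i,\vx}=-\nj{\va_i,\vy}$. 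Accordingly I would set $I:=\{i:\nj{\va_i,\vx}=\nj{\va_i,\vy}\}$, so that $\va_i\perp(\vx-\vy)$ for $i\in I$, and on $I^c$ we have $\nj{\va_i,\vx}=-\nj{\va_i,\vy}$, giving $\va_i\perp(\vx+\vy)$.

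For the sufficiency direction ($\Leftarrow$), I would assume the complement property and take any $\vx,\vy$ with $\abs{A\vx}=\abs{A\vy}$, forming the partition $I,I^c$ above. The complement property gives $\spann\{\va_j:j\in I\}=\Rd$ or $\spann\{\va_j:j\in I^c\}=\Rd$. In the first case $\vx-\vy$ is orthogonal to a spanning set, forcing $\vx=\vy$; in the second case $\vx+\vy=0$, forcing $\vx=-\vy$. Either way $\underline{\vx}=\underline{\vy}$, so $A$ has phase retrieval property.

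For the necessity direction ($\Rightarrow$) I would argue by contrapositive: suppose the complement property fails, so there is an index set $I$ with $\spann\{\va_j:j\in I\}\neq\Rd$ and $\spann\{\va_j:j\in I^c\}\neq\Rd$. Then I can pick nonzero $\vu\perp\va_j$ for all $j\in I$ and nonzero $\vv\perp\va_j$ for all $j\in I^c$, and set $\vx:=\vu+\vv$ and $\vy:=\vu-\vv$. A direct computation shows that for $j\in I$ one gets $\nj{\va_j,\vx}=\nj{\va_j,\vv}=-\nj{\va_j,\vy}$ and for $j\in I^c$ one gets $\nj{\va_j,\vx}=\nj{\va_j,\vu}=\nj{\va_j,\vy}$, hence $\abs{A\vx}=\abs{A\vy}$. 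Since $\vx-\vy=2\vv\neq 0$ and $\vx+\vy=2\vu\neq 0$, we have $\underline{\vx}\neq\underline{\vy}$, exhibiting a failure of injectivity and thus of the phase retrieval property.

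The argument is essentially bookkeeping of signs, so I do not expect a genuine obstacle. The only points requiring care are the treatment of indices with $\nj{\va_i,\vx}=\nj{\va_i,\vy}=0$ (which may be assigned to either $I$ or $I^c$ without affecting the orthogonality relations), and verifying in the necessity direction that the constructed $\vx,\vy$ are genuinely inequivalent, which follows from the nonvanishing of $\vu$ and $\vv$ guaranteed by the two proper-subspace conditions.
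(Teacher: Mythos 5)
Your proof is correct. The paper does not prove this lemma at all---it is quoted from \cite{bandeira2014saving}---and your argument is exactly the standard one from that reference: partitioning the indices by the sign agreement of $\nj{\va_i,\vx}$ and $\nj{\va_i,\vy}$ reduces injectivity to the complement property, and conversely the construction $\vx=\vu+\vv$, $\vy=\vu-\vv$ with $\vu\perp\{\va_j: j\in I\}$, $\vv\perp\{\va_j: j\in I^c\}$ witnesses failure of injectivity, with the zero-inner-product and $\vx\neq\pm\vy$ edge cases handled correctly.
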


The following lemma shows that, for the real case, any solution to (\ref{eq:least})
satisfies a fixed-point equation.

\begin{lemma}\cite{huangxu} \label{fixed-point real}
Suppose that $A:=[\va_1,\ldots,\va_m]^\T\in \R^{m\times d}$ and  $\vb\in\R_+^m$. Assume that $\x$ is a
solution to (\ref{eq:least}). Then $\x$ satisfies the following fixed-point equation:
\begin{equation*}
  \x=(A^\T A)^{-1}A^\T (\vb\odot\s(A\x)),
\end{equation*}
where $\odot$ denotes the Hadamard product and $\s(A\x):=
\left(\frac{\nj{\va_1,\x}}{\abs{\nj{\va_1,\x}}},\ldots,\frac{\nj{\va_m,\x}}{\abs{\nj{\va_m,\x}}}\right)$
for any $\x\in\R^d$. Here, $\frac{\nj{\va_j,\x}}{\abs{\nj{\va_j,\x}}}=1$ is adopted
if $\nj{\va_j,\x}=0$.
\end{lemma}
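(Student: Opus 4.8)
The plan is to view $\x$ as a global minimizer of the objective $f(\vx):=\|\abs{A\vx}-\vb\|^2$ and to extract a first-order stationarity condition. Since $\abs{\langle\va_j,\vx\rangle}^2=\langle\va_j,\vx\rangle^2$, I would first rewrite $f(\vx)=\|A\vx\|^2-2\sum_{j=1}^m b_j\abs{\langle\va_j,\vx\rangle}+\|\vb\|^2$, where the outer terms are smooth and the middle term is a finite sum of the nonsmooth functions $\abs{\langle\va_j,\vx\rangle}$. The key device is directional derivatives: because $\x$ minimizes $f$, the one-sided derivative $f'(\x;\vd)\ge 0$ must hold for every direction $\vd\in\R^d$.

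Next I would partition the indices according to whether the inner product vanishes at the optimum. Let $J_0:=\{j:\langle\va_j,\x\rangle=0\}$. For $j\notin J_0$ the map $\abs{\langle\va_j,\vx\rangle}$ is differentiable at $\x$ with derivative $\mathrm{sign}(\langle\va_j,\x\rangle)\langle\va_j,\vd\rangle$, while for $j\in J_0$ its directional derivative is the nonsmooth quantity $\abs{\langle\va_j,\vd\rangle}$. Collecting terms and setting $\vu:=A^\T A\x-\sum_{j\notin J_0}b_j\,\mathrm{sign}(\langle\va_j,\x\rangle)\va_j$, the stationarity inequality $f'(\x;\vd)\ge0$ becomes $\langle\vu,\vd\rangle\ge\sum_{j\in J_0}b_j\abs{\langle\va_j,\vd\rangle}$ for all $\vd$.

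The crucial step is disposing of the indices in $J_0$, and this is where I expect the only genuine subtlety to lie. Replacing $\vd$ by $-\vd$ and adding the resulting two inequalities forces $\sum_{j\in J_0}b_j\abs{\langle\va_j,\vd\rangle}=0$ for every $\vd$; since each summand is nonnegative and $\vb\in\R_+^m$, this yields $b_j\va_j=\bm 0$ for each $j\in J_0$. Hence the $J_0$ indices contribute nothing to $A^\T(\vb\odot\s(A\x))$, so the convention $\s_j=1$ adopted there is irrelevant. Feeding this back, the inequality collapses to $\langle\vu,\vd\rangle\ge0$ for all $\vd$, and testing with $\pm\vd$ gives $\vu=\bm 0$, i.e.\ $A^\T A\x=\sum_{j\notin J_0}b_j\,\mathrm{sign}(\langle\va_j,\x\rangle)\va_j=A^\T(\vb\odot\s(A\x))$.

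Finally, to invert I would note that $A^\T A$ is nonsingular under the standing assumptions (for instance, under the phase retrieval/complement property of Lemma~\ref{le:comp}, taking $I=\{1,\ldots,m\}$ forces $\mathrm{span}\{\va_j\}=\R^d$, so $A$ has full column rank). Left-multiplying $A^\T A\x=A^\T(\vb\odot\s(A\x))$ by $(A^\T A)^{-1}$ then produces the claimed fixed-point identity. The main obstacle is entirely the nonsmoothness at $J_0$; once the $\pm\vd$ symmetrization argument eliminates those terms, what remains is the standard smooth first-order optimality computation.
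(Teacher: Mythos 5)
Your proof is correct, but it follows a different route from the one behind the paper's citation: this lemma is quoted from \cite{huangxu} without proof, and the argument there is a sign-freezing (majorization) trick rather than a first-order stationarity analysis. Concretely, for $b\ge 0$ and any fixed sign $s\in\{\pm 1\}$ one has $(t-bs)^2=t^2-2bst+b^2\ge t^2-2b\abs{t}+b^2=(\abs{t}-b)^2$, with equality when $s=\mathrm{sign}(t)$ (and also at $t=0$ under the convention $s=1$). Hence, setting $g(\vx):=\norm{A\vx-\vb\odot\s(A\x)}^2$ with the signs frozen at the minimizer $\x$, one gets $g(\vx)\ge \norm{\abs{A\vx}-\vb}^2\ge\norm{\abs{A\x}-\vb}^2=g(\x)$ for all $\vx$, so $\x$ is a global minimizer of the smooth convex quadratic $g$, and the normal equations $A^\T A\x=A^\T(\vb\odot\s(A\x))$ follow at once, with the convention at zero entries absorbed automatically. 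Your directional-derivative argument is sound at every step: the decomposition into smooth and nonsmooth parts, the one-sided optimality condition $f'(\x;\vd)\ge 0$, and the $\pm\vd$ symmetrization that forces $\sum_{j\in J_0}b_j\abs{\nj{\va_j,\vd}}=0$ and hence $b_j\va_j=\bm{0}$ on $J_0$ (using $\vb\in\R_+^m$) are all correct, and you rightly flag that $\vb\in\R_+^m$ and the invertibility of $A^\T A$ are the load-bearing hypotheses — the latter is indeed implicit in the lemma's statement and guaranteed in this paper's applications by the complement property. What the citation's proof buys is brevity and a conceptual point (the nonsmooth minimizer solves a linear least squares with its own signs, which is exactly why it is a ``fixed point''); what your proof buys is extra information, namely the structural fact that at any minimizer every index with $\nj{\va_j,\x}=0$ must satisfy $b_j\va_j=\bm{0}$, which the frozen-sign argument never reveals.
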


%

\section{The non-uniqueness of solutions to (\ref{eq:least})}
The aim of this section is to answer Question I by showing that the solution to
(\ref{eq:least}) is nonunique  for some vectors $\vb\in \R^m$. We state  the main
result of this section as follows.

\begin{theorem}\label{th:main}
Assume that $m, d$ are positive integers. For arbitrary matrix $A\in \HH^{m\times
d}$, there exists $\vb\in \R^m$ so that the solution to (\ref{eq:least}) is not unique
where $\HH\in \{\R,\C\}$.
\end{theorem}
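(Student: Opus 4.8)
The plan is to show that for any matrix $A \in \HH^{m\times d}$ one can construct a vector $\vb \in \R^m$ lying on (or near) the ``diagonal'' where two distinct phaseless images become equidistant. The most natural strategy is to exploit the symmetry of the phaseless surface $\KK_A$. Observe that $\vy = \abs{A\vx}$ is manifestly invariant under $\vx \mapsto c\vx$ for unimodular $c$, but the key extra symmetry is the involution $\vx \mapsto \bar{\vx}$ in the complex case, or sign flips $\vx \mapsto D\vx$ for diagonal $\pm 1$ matrices $D$ in the real case. My first step would be to locate two vectors $\vx_1 \not\sim \vx_2$ in $\HH^d$ whose images $\abs{A\vx_1}$ and $\abs{A\vx_2}$ are genuinely distinct points of $\KK_A$ (if no such pair exists, $\KK_A$ is a single ray and the statement is trivial to arrange). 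Then set $\vb$ to be the midpoint-type vector equidistant from these two images.

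The cleanest execution is to pick $\vx_1, \vx_2$ with $\abs{A\vx_1} \neq \abs{A\vx_2}$ and define
\[
\vb \,:=\, \frac{\abs{A\vx_1} + \abs{A\vx_2}}{2},
\]
so that $\norm{\abs{A\vx_1} - \vb} = \norm{\abs{A\vx_2} - \vb}$ automatically. This does not immediately finish the argument, because I must rule out the possibility that some \emph{third} image $\abs{A\vx_3}$ is strictly closer to $\vb$, which would leave the solution unique (just not equal to either $\vx_1$ or $\vx_2$). To control this, I would not take an arbitrary pair but rather choose $\vx_1, \vx_2$ related by one of the symmetries above, e.g. differing in a single coordinate's sign pattern, so that $\vb$ inherits a symmetry under which the whole problem is invariant; any minimizer can then be reflected to a second distinct minimizer, forcing nonuniqueness. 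Concretely, I expect the construction to produce a $\vb$ fixed by an isometry $T$ of $\R^m$ that permutes $\KK_A$ nontrivially, so that $\vx$ a solution implies $T$-reflected $\vx$ is also a solution, and these are inequivalent.

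The main obstacle is precisely the potential collapse: guaranteeing that the two candidate solutions produced by the symmetry are genuinely \emph{inequivalent} in $\underline{\HH^d}$ and that no unrelated third point of $\KK_A$ secretly achieves a strictly smaller distance. I would handle this by a dimension/measure argument or, more robustly, by choosing $\vb$ sufficiently far from $\KK_A$ (or on a carefully chosen symmetry axis) so that the global geometry of $\KK_A$ relative to $\vb$ is dominated by the symmetric pair; pushing $\vb$ along the symmetry axis away from $\KK_A$ makes the two symmetric images the joint closest points by a compactness or monotonicity argument. A subtle point to check separately is the degenerate case where $A$ has low rank or $\KK_A$ is already symmetric enough that $\KK_A$ is a single ray, and the complex versus real cases may need slightly different symmetries ($\vx \mapsto \bar{\vx}$ versus coordinate sign flips), but in both the underlying mechanism is the same: break uniqueness by placing $\vb$ on an axis of symmetry of $\KK_A$.
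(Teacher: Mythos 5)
Your construction contains a genuine gap at its central step. The symmetries you invoke act on the domain $\HH^d$, not on the ambient space $\R^m$: for a diagonal sign matrix $D$ one has $\abs{A(D\vx)}=\abs{(AD)\vx}$, and in the complex case $\abs{A\bar{\vx}}=\abs{\bar{A}\vx}$, so while these maps leave $\KK_A$ invariant \emph{as a set}, the induced bijection of $\KK_A$ is not the restriction of any isometry $T$ of $\R^m$ for an arbitrary matrix $A$ (that would require a row symmetry such as $AD=PA$ with $P$ a signed permutation, which a generic $A$ lacks). Consequently the objective $\norm{\abs{A\vx}-\vb}$ is \emph{not} invariant under $\vx\mapsto D\vx$ even when $\vb$ is the midpoint of $\abs{A\vx_1}$ and $\abs{AD\vx_1}$, so the key inference ``$\vx$ a minimizer $\Rightarrow$ the reflected point is a minimizer'' fails, and with it the whole mechanism for producing a second inequivalent solution. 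Your fallback of pushing $\vb$ far out along a symmetry axis also cannot work as stated: $\KK_A$ is a cone ($\abs{A(t\vx)}=t\abs{A\vx}$ for $t>0$), so all distances scale and no choice of magnitude isolates the chosen pair; the ``third point collapse'' that you correctly identify as the main obstacle is thus left unresolved. (You also do not use the easy dichotomy: if $A$ fails the phase retrieval property, taking $\vb=\abs{A\vx_1}=\abs{A\vx_2}$ with $\underline{\vx_1}\neq\underline{\vx_2}$ finishes immediately, and the hard case is precisely the phase-retrievable one.)

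The paper avoids exhibiting $\vb$ at all, which is a sign that a direct construction is delicate. After disposing of the non--phase-retrievable case as above, it shows (Lemma \ref{th:nonc}) that $\KK_A$ is \emph{not convex} whenever $A$ has the phase retrieval property: in the real case one normalizes the first $d$ rows to $\ve_1,\ldots,\ve_d$, uses a row $\va_{d+1}$ with at least two nonzero entries (forced by phase retrievability), and derives the contradiction $a_{d+1,1}=\cdots=a_{d+1,d-1}=0$ from the assumption that the midpoints $\tfrac12\abs{A\vx}+\tfrac12\abs{A\vx'}$ lie in $\KK_A$; in the complex case a dimension count shows a one-parameter family of attainable values cannot match a two-parameter family. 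Nonconvexity is then converted into the theorem by the finite-dimensional characterization of Chebyshev sets (Lemma \ref{th:best}: Chebyshev $\iff$ closed and convex), which yields some $\vb$ with a non-unique best approximation from $\KK_A$, and Lemma \ref{le:one} translates this into non-uniqueness of solutions of (\ref{eq:least}). Note that your midpoint idea does appear in the paper, but pointed in the opposite direction: midpoints are used to \emph{refute convexity of $\KK_A$}, not to serve as the nonuniqueness witness $\vb$; to salvage your approach you would in effect have to reprove by hand the nonconvexity-implies-non-Chebyshev step, which is exactly what your symmetry heuristic does not supply.
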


To prove this theorem, according to Lemma \ref{le:one}, it is enough  to show the set
$\KK_A$ is not a Chebyshev set. From Lemma \ref{th:best}, we can do it by showing the
set $\KK_A$ is not a convex set.

\begin{lemma}\label{th:nonc}
Assume that $A\in \HH^{m\times d}$ has phase retrieval property.  Then the set
$\KK_A\subset \R^m$ is non-convex.
\end{lemma}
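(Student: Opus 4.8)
The plan is to show that $\KK_A$ fails to be convex by exhibiting two points $\vb_1,\vb_2\in\KK_A$ whose midpoint $\tfrac12(\vb_1+\vb_2)$ does not lie in $\KK_A$. Since $A$ has the phase retrieval property, every point of $\KK_A$ is of the form $\abs{A\vx}$, and $\vy=\abs{A\vx}$ determines $\underline{\vx}$ uniquely; this injectivity (on equivalence classes) is exactly the leverage I would use. First I would pick a single generic vector $\vx\in\HH^d$ with $\nj{\va_j,\vx}\neq 0$ for all $j$, and set $\vb_1:=\abs{A\vx}$ and $\vb_2:=\abs{A(-\vx)}=\abs{A\vx}=\vb_1$ — but that is useless since the two points coincide. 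So instead I would choose two \emph{different} signal directions. A clean choice is to take $\vx_1,\vx_2$ with $\underline{\vx_1}\neq\underline{\vx_2}$ and examine whether the averaged magnitude vector $\tfrac12(\abs{A\vx_1}+\abs{A\vx_2})$ can itself be realized as $\abs{A\vz}$ for some $\vz$.

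The key structural observation I would exploit is the entrywise behavior. Write $u_j:=\nj{\va_j,\vx_1}$ and $v_j:=\nj{\va_j,\vx_2}$. The midpoint has $j$-th coordinate $\tfrac12(\abs{u_j}+\abs{v_j})$, and I need to argue that no single $\vz$ produces $\abs{\nj{\va_j,\vz}}=\tfrac12(\abs{u_j}+\abs{v_j})$ simultaneously for all $j$. The cleanest route is a dimension/degrees-of-freedom or a strict-convexity argument: the map $\vz\mapsto\abs{A\vz}$ has image $\KK_A$, and I would argue that convexity of $\KK_A$ would force $\KK_A$ to be a linear image of a convex cone in a way incompatible with the ``folding'' caused by the absolute values. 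Concretely, in the real case, $\KK_A$ is the union over all sign patterns $\epsilon\in\{\pm1\}^m$ of the sets $\{\,\abs{D_\epsilon A\vx}: D_\epsilon A\vx\ge 0\,\}$ where $D_\epsilon=\diag(\epsilon)$; each such piece is a (polyhedral) cone, and $\KK_A$ is a finite union of such cones, hence generically non-convex unless it degenerates. I would make this precise by choosing $\vx_1,\vx_2$ realizing \emph{different} sign patterns and checking that their average lies strictly outside $\KK_A$ using the phase retrieval (injectivity) property to rule out any alternative preimage.

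The cleanest and most robust argument, which I would ultimately prefer, is by contradiction via injectivity. Suppose $\KK_A$ were convex. Take any $\vx$ with all $\nj{\va_j,\vx}\neq 0$, and consider $\vb_1=\abs{A\vx}$ together with a second point obtained by flipping the sign of $\vx$ on a proper, nonempty subset of coordinates of $A\vx$ — i.e.\ choose $\vx_2$ so that $\abs{A\vx_2}=\abs{A\vx}$ coordinatewise but $\underline{\vx_2}\neq\underline{\vx}$; the existence of such a ``sign flip'' witnessing non-injectivity would already contradict phase retrieval, so that approach must be adjusted. Instead I would take genuinely distinct magnitude vectors $\vb_1,\vb_2$ and show the segment between them exits $\KK_A$: if the whole segment stayed in $\KK_A$, then by phase retrieval we would get a continuous selection $t\mapsto\underline{\vx(t)}$ with $\abs{A\vx(t)}=(1-t)\vb_1+t\vb_2$, and differentiating/evaluating the coordinatewise identity $\abs{\nj{\va_j,\vx(t)}}=(1-t)b_{1,j}+tb_{2,j}$ would pin down each $\nj{\va_j,\vx(t)}$ up to a sign that cannot vary continuously once some coordinate passes through zero, producing the contradiction.

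The main obstacle, and the step I expect to require the most care, is handling the degenerate low-dimensional configurations and the boundary coordinates where $\nj{\va_j,\vz}=0$: the absolute value is non-differentiable there, and a naive midpoint might accidentally stay inside $\KK_A$ if the two chosen points share too much sign structure. To guarantee the construction works for \emph{every} matrix $A$ with the phase retrieval property (and both $\HH=\R$ and $\HH=\C$), I would need to select $\vx_1,\vx_2$ carefully so that at least one coordinate genuinely ``folds'' — i.e.\ $\nj{\va_j,\vx_1}$ and $\nj{\va_j,\vx_2}$ have strictly different magnitudes and the averaged value forces an inconsistent phase — and then invoke Lemma~\ref{le:comp} (the complement property in the real case) to ensure enough spanning directions are available that no single preimage can reconcile all coordinates simultaneously.
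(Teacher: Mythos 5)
Your proposal is a strategy outline rather than a completed argument, and the one mechanism you actually commit to is flawed. In your segment argument, suppose the whole segment $(1-t)\vb_1+t\vb_2$ stays in $\KK_A$, and (granting the continuous selection, which in finite dimensions does follow from the bilipschitz stability of phase retrieval, Lemma~\ref{th:stable}) set $g_j(t):=\nj{\va_j,\vx(t)}$. Then $\abs{g_j(t)}=(1-t)b_{1,j}+tb_{2,j}$ is an \emph{affine nonnegative} function on $[0,1]$; such a function is either identically zero or vanishes at most at an endpoint. Hence no coordinate ever ``passes through zero'' in the interior, the sign of each $g_j$ is constant on $(0,1)$, and no contradiction arises at this stage: one only concludes that $A\vx(t)$ is affine in $t$ with a single fixed sign pattern $\sigma\in\{\pm1\}^m$ serving both endpoints. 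The real content of the proof is then to exhibit a specific pair $\vx_1,\vx_2$ for which no choice of representatives $\pm\vx_1,\pm\vx_2$ admits such a common sign pattern compatible with the midpoint lying in $\KK_A$ --- exactly the step you defer (``I would need to select $\vx_1,\vx_2$ carefully''). The paper carries this out concretely: after normalizing so that $\va_j=\ve_j$ for $j=1,\dots,d$, it uses the phase retrieval property to find a row $\va_{d+1}$ with $\norm{\va_{d+1}}_0\ge 2$, takes $\vx$ and $\vx'=(-x_1,\dots,-x_{d-1},x_d)$ ranging over $[-\epsilon,\epsilon]^{d-1}\times[r_0,\infty]$, and shows that convexity of $\KK_A$ would force the linear identity $a_{d+1,1}\hat x_1+\cdots+a_{d+1,d-1}\hat x_{d-1}=0$ on a whole cube of parameters, whence $a_{d+1,1}=\cdots=a_{d+1,d-1}=0$, contradicting $\norm{\va_{d+1}}_0\ge 2$. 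Your second-paragraph remark that a finite union of polyhedral cones is ``generically non-convex unless it degenerates'' is not an argument: a finite union of cones can perfectly well be convex (for $d=1$ the set $\KK_A$ \emph{is} a convex ray even though $A$ is phase retrievable, so any correct proof must exploit $d\ge 2$, e.g.\ through the existence of a row with two nonzero entries), and ruling out the ``degenerate'' case is the whole problem, not a side remark.

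The second genuine gap is the complex case, for which your proposal offers nothing. The sign-rigidity heuristic has no analogue over $\C$: the phase of $g_j(t)$ lives on a circle, so $\abs{g_j(t)}$ being affine does not pin $g_j(t)$ down up to a discrete choice, and Lemma~\ref{le:comp} (the complement property) is a purely real characterization. The paper handles $\HH=\C$ by a different device, a degrees-of-freedom count: it builds a two-parameter family of pairs $\vx=(0,\dots,0,r'e^{-i\theta_1},x_d)$, $\vx'=(0,\dots,0,r'e^{-i\theta_2},x_d)$ whose midpoints have coordinates $(s_1(\theta_1,\theta_2),s_2(\theta_1,\theta_2))$ sweeping a two-dimensional set, while any preimage in $\KK_A$ constrains these two coordinates to the one-parameter family $\theta'\mapsto\left(\abs{a_{d+1,d-1}r'e^{i\theta'}+x_d},\abs{a_{d+2,d-1}r'e^{i\theta'}+x_d}\right)$, which cannot cover it. Without an explicit construction in the real case and some substitute for the sign argument in the complex case, your outline does not yet constitute a proof.
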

\begin{proof}
We first prove it in the real case where $\HH=\R$.
For the aim of contradiction, we assume that $\KK_A$ is convex.
 Let $A:=[\va_1,\ldots,\va_m]^\T \in \R^{m\times
d}$ with
\[
\va_j=(a_{j,1},\ldots,a_{j,d})^\T\in \R^d.
\]
 Without loss of
generality, we assume that $\va_j=\ve_j,j=1,\ldots,d$. Since $A$ has phase retrieval
property,  there exists $j_0\in \dkh{d+1,\ldots, m}$ so that $\|\va_{j_0}\|_0\geq 2$.
Without loss of generality, we assume that $j_0=d+1$ and the $d$-th component of
$\va_{d+1}$ is positive, i.e., $a_{d+1,d}>0$. For any $\vx=(x_1,\ldots,x_d)\in
{\mathbb R}^d$, we have $\abs{\innerp{\va_j,\vx}}=\abs{x_j}, \; j=1,\ldots,d$. Then
there exist $\epsilon>0$ and $r_0>0$ so that $\innerp{\va_{d+1},\vx}>0$ provided
$\vx\in [-\epsilon,\epsilon]^{d-1}\times [r_0,\infty]$.
 Let $\vx':=(-x_1,\ldots,-x_{d-1},x_d)$.
Since $\KK_A$ is convex, we have
\begin{equation}\label{eq:2CA}
\frac{1}{2}\abs{A\vx}+\frac{1}{2}\abs{A\vx'}\in \KK_A,
\end{equation}
where $\vx,\vx'\in [-\epsilon,\epsilon]^{d-1}\times [r_0,\infty]$.
Note that the first $d+1$ entries of $\frac{1}{2}\abs{A\vx}+\frac{1}{2}\abs{A\vx'}$
 is $(\abs{x_1},\ldots,\abs{x_d},a_{d+1,d}x_d)$
 provided  $\vx,\vx'\in [-\epsilon,\epsilon]^{d-1}\times [r_0,\infty]$.
 Here, we use $\innerp{\va_{d+1},\vx}>0$
 and $\innerp{\va_{d+1},\vx'}>0$ if  $\vx,\vx'\in [-\epsilon,\epsilon]^{d-1}\times
[r_0,\infty]$. According to (\ref{eq:2CA}), for any $\vx\in
[-\epsilon,\epsilon]^{d-1}\times [r_0,\infty]$, there exists
$\hat{\vx}=(\hat{x}_1,\ldots,\hat{x}_d)$ with $\hat{x}_d>0$ so that
\begin{equation}\label{eq:jue}
\abs{\hat{x}_j}=\abs{{x}_j}, j=1,\ldots,d, \quad \abs{\innerp{\va_{d+1},\hat{\bm{x}}}}={a_{d+1,d}x_d}.
\end{equation}
Note that $x_d>r_0>0$. Combining $\abs{x_d}=\abs{\hat{x}_d}$ and $\hat{x}_d>0$, we
have $\hat{x}_d=x_d$. Since $\abs{\hat{x}_j}=\abs{x_j}\leq \epsilon$ for all
$j=1,\ldots,d-1$, then the choice of $r_0$ implies $\innerp{\va_{d+1},\hat{\vx}}>0$.
 According to (\ref{eq:jue}),  we have
 \[
 {\innerp{\va_{d+1},\hat{\vx}}}={a_{d+1,d}x_d},
 \]
 which implies that
 \begin{equation}\label{eq:ax0}
 a_{d+1,1}\hat{x}_1+\cdots+a_{d+1,d-1}\hat{x}_{d-1}=0
 \end{equation}
holds for any $(x_1,\ldots,x_{d-1})\in [-\epsilon,\epsilon]^{d-1}$. Combining
(\ref{eq:ax0}) and $\abs{\hat{x}_j}=\abs{x_j}$, we obtain that
$a_{d+1,1}=\cdots=a_{d+1,d-1}=0$, which contradicts to $\|\va_{d+1}\|_0\geq 2$.

We next turn to the complex case where $\HH=\C$. For the aim of contradiction, we assume that
$\KK_A$ is convex.
 Without loss of generality, we assume that $\va_j=\ve_j,j=1,\ldots,d$.
  Since $A \in \C^{m\times d}$ has phase retrieval property,  there exist distinct
$j_0, k_0\in \dkh{d+1,\ldots, m}$ so that  $\abs{a_{j_0,d-1}}^2+\abs{a_{j_0,d}}^2\neq
0$ and $\abs{a_{k_0,d-1}}^2+\abs{a_{k_0,d}}^2\neq 0$. Otherwise, one can not recover
the vector in the form of $(0,\ldots,0,x_{d-1},x_d)\in \C^d$. Without loss of
generality, we assume that $j_0=d+1,k_0=d+2$ and $a_{d+1,d}=a_{d+2,d}=1$.
 We assume that $\vx=(0,\ldots,0,r'e^{-i\theta_{1}},x_d)$ and
$\vx'=(0,\ldots,0,r'e^{-i\theta_{2}},x_d)$. Here, $\theta_1,\theta_2\in [0,2\pi)$ and
$r'>0, x_d>0$ are fixed constants.
 We have
\begin{equation}\label{eq:ccon}
\frac{1}{2}\abs{A\vx}+\frac{1}{2}\abs{A\vx'}\in \KK_A.
\end{equation}
A simple calculation shows that the first $d+2$ entries of
$\frac{1}{2}\abs{A\vx}+\frac{1}{2}\abs{A\vx'}$ are $(\underbrace{0,\ldots,0}_{d-2},
r', x_d, s_1,s_2)^\T$ where $s_1:=s_1(\theta_1,\theta_2)=\frac{1}{2}
(\abs{a_{d+1,d-1}r'e^{-i\theta_{1}}+x_d}+\abs{a_{d+1,d-1}r'e^{-i\theta_{2}}+x_d}))$
and $ s_2:=s_2(\theta_1,\theta_2)=\frac{1}{2}
(\abs{a_{d+2,d-1}r'e^{-i\theta_{1}}+x_d}+\abs{a_{d+2,d-1}r'e^{-i\theta_{2}}+x_d}))$.
 According to (\ref{eq:ccon}), there exists $\hat{\vx}=(0,\ldots,0,\hat{x}_{d-1},
\hat{x}_d)$ with $\hat{x}_d>0$ so that
\[
\abs{\hat{x}_{d-1}}=\abs{x_{d-1}}=r',\quad \abs{\hat{x}_d}={x_d},\quad
\abs{a_{d+1,d-1}\hat{x}_{d-1}+\hat{x}_d}=s_1, \quad
\abs{a_{d+2,d-1}\hat{x}_{d-1}+\hat{x}_d}=s_2.
\]
Since $\hat{x}_d>0$ and $\abs{\hat{x}_d}={x_d}$, we have $\hat{x}_d=x_d$. The
$\abs{\hat{x}_{d-1}}=r'$ implies $\hat{x}_{d-1}=r'e^{i\theta'}$ for some $\theta'\in
\R$. So, $(\abs{a_{d+1,d-1}\hat{x}_{d-1}+\hat{x}_d},
\abs{a_{d+2,d-1}\hat{x}_{d-1}+\hat{x}_d})$ is a one dimensional manifold with respect
to $\theta'$ while $(s_1,s_2)$ is two dimensional manifold with respect to $\theta_1$ and
$\theta_2$, which is a contradiction.
\end{proof}

We next present the proof of Theorem \ref{th:main}.

\begin{proof}[Proof of Theorem \ref{th:main}]
We divide the proof into two cases:

{\bf Case 1:} The matrix $A$ does not have phase retrieval property.  From the
definition of phase retrievable, there exist two vectors $\vx_1,\vx_2 \in \HH^d$ with
$\underline{\vx_1}\neq \underline{\vx_2}$  such that $\abs{A\vx_1}=\abs{A\vx_2}$. Let
$\vb:=\abs{A\vx_1}$. Then (\ref{eq:least}) has two solutions  $\vx_1,\vx_2 $ for such
vector $\vb$. The conclusion holds.

{\bf Case 2:} The matrix $A$ has phase retrieval property.
 According to Lemma \ref{th:nonc}, $\KK_A$ is non-convex,
 which means the set $\KK_A$ is not a Chebyshev set by Lemma  \ref{th:best}.
 Thus, the conclusion immediately  follows from Lemma \ref{le:one}.
\end{proof}

The next result shows that, in the real case, $\#\Phi_A(\vb)$ is finite, i.e., the
solutions to (\ref{eq:least}) are  finite.
\begin{theorem}
Assume that $A\in \R^{m\times d}$ has phase retrieval property. Then for any vector
$\vb\in \R^m$, the number of solutions to (\ref{eq:least}) is finite.
\end{theorem}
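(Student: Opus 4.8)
The plan is to exploit the fixed-point characterization from Lemma~\ref{fixed-point real} together with the phase retrieval property to show that solutions are confined to finitely many ``sign patterns,'' each of which admits at most one solution. The key observation is that every solution $\x$ to \eqref{eq:least} satisfies
\[
\x=(A^\T A)^{-1}A^\T\!\left(\vb\odot\s(A\x)\right),
\]
and the right-hand side depends on $\x$ only through the sign vector $\s(A\x)\in\{-1,+1\}^m$. Note that the phase retrieval property implies, via the complement property (Lemma~\ref{le:comp}), that $\spann\{\va_j\}=\R^d$, so $A$ has full column rank and $A^\T A$ is invertible; thus the fixed-point map is well defined.

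First I would stratify $\R^d\setminus\{\vx:\nj{\va_j,\x}=0\text{ for some }j\}$ according to the sign pattern $\sigma=\s(A\x)\in\{-1,+1\}^m$. There are at most $2^m$ such patterns. For each fixed $\sigma$, the fixed-point equation forces
\[
\x=(A^\T A)^{-1}A^\T(\vb\odot\sigma)=:\vx_\sigma,
\]
which is a single, explicitly determined point of $\R^d$ depending only on $\sigma$ (not on $\x$). Consequently, within the open region where the sign pattern equals $\sigma$, any solution must coincide with $\vx_\sigma$; so each region contributes at most one solution. This already shows that the number of solutions lying off the ``boundary set'' $\{\x:\nj{\va_j,\x}=0\text{ for some }j\}$ is at most $2^m$, hence finite.

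The main obstacle is handling solutions $\x$ for which some inner product $\nj{\va_j,\x}$ vanishes, since on this measure-zero boundary set the sign convention $\frac{\nj{\va_j,\x}}{\abs{\nj{\va_j,\x}}}=1$ is forced and the stratification argument needs care. I would address this by allowing the sign of each vanishing coordinate to be assigned the convention value, so that $\s(A\x)$ still takes values in a finite set of size at most $2^m$; the fixed-point identity then again pins $\x$ to the corresponding $\vx_\sigma$, and no extra solutions can arise. An alternative, cleaner route is to observe that the objective $\vx\mapsto\norm{\abs{A\vx}-\vb}^2$ is piecewise quadratic, being a genuine quadratic on each of the finitely many polyhedral cones determined by the sign patterns of $(\nj{\va_1,\x},\ldots,\nj{\va_m,\x})$; on each such cone the minimizer of a strictly convex quadratic (strict convexity following from $A^\T A\succ0$) is unique, and a global minimizer of \eqref{eq:least} must be a minimizer on the cone containing it. Either way, every global solution is one of at most $2^m$ candidate points $\vx_\sigma$, which establishes finiteness.

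Finally, to conclude I would note that the scaling equivalence $\vx\sim-\vx$ (unimodular constants in the real case being $\pm1$) is automatically respected, since $\s(A(-\x))=-\s(A\x)$ sends $\sigma$ to $-\sigma$ and hence $\vx_{-\sigma}=-\vx_\sigma$; thus the candidate points come in $\sim$-equivalent pairs and $\#\Phi_A(\vb)\le 2^{m-1}$ even as a count of equivalence classes, which is finite as claimed.
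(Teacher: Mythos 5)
Your primary (fixed-point) route has a genuine gap: Lemma~\ref{fixed-point real} is stated, and is only true, for $\vb\in\R_+^m$, whereas the theorem quantifies over all $\vb\in\R^m$. For $\vb$ with negative entries the fixed-point identity actually fails. Take $m=d=1$, $A=(1)$, $b=-1$: the objective is $(\abs{x}+1)^2$, whose unique minimizer is $x=0$, while the fixed-point formula gives $(A^\T A)^{-1}A^\T(\vb\odot \s(Ax))=-1\neq 0$. The reason is that when $b_j<0$ the term $(\abs{\nj{\va_j,\vx}}-b_j)^2$ has an upward kink at $\nj{\va_j,\vx}=0$, so a global minimizer can sit on a boundary hyperplane without being a stationary point of any of the smooth quadratic pieces, and no sign convention for $\s$ repairs the identity. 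So the stratification-by-fixed-point argument, as written, proves finiteness only for $\vb\in\R_+^m$.

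Fortunately your ``alternative, cleaner route'' is correct for all $\vb\in\R^m$ and rescues the proof, with one slip to fix: on the closed cone $C_\sigma:=\dkh{\vx\in\R^d:\sigma_i\nj{\va_i,\vx}\ge 0,\ i=1,\ldots,m}$ the relevant candidate is the minimizer of the strictly convex quadratic $\norm{D_\sigma A\vx-\vb}^2$ \emph{over the convex set} $C_\sigma$ (unique by strict convexity, since $A^\T A\succ 0$), not the unconstrained point $\vx_\sigma=(A^\T A)^{-1}A^\T(\vb\odot\sigma)$, which may lie outside $C_\sigma$; ``at most one global solution per cone'' is what you need and what the argument delivers, giving $\#\Phi_A(\vb)\le 2^m$. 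This corrected argument is essentially the paper's proof transported from the measurement domain to the signal domain: the paper decomposes $\KK_A=\bigcup_{S\subset\dkh{1,\ldots,m}}\dkh{\vy\ge 0:\vy=A_S\vx,\ \vx\in\R^d}$ into at most $2^m$ closed convex sets, invokes uniqueness of the best approximation from each convex piece (Lemmas~\ref{th:best} and~\ref{le:one}), and then uses the phase retrieval property to pass from best approximations back to solutions. Your cone version never leaves $\R^d$ and needs only ${\rm rank}(A)=d$ (supplied by the complement property, Lemma~\ref{le:comp}), so it avoids the injectivity transfer step altogether and is in that sense slightly more general. Your closing remark about the $\sigma\leftrightarrow-\sigma$ pairing and the bound $2^{m-1}$ on equivalence classes is fine.
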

\begin{proof}
Since $A$ is phase retrievable, it suffices to show the number of the best
approximations to  $\vb$ from $\KK_A$ is finite. Note that
\[
\KK_A=\dkh{\vy:\vy=\abs{A\vx}, \vx\in \R^d}=\cup_{S\subset \dkh{1,\ldots, m}}\dkh{\vy\ge 0:\vy=A_{S}\vx, \vx\in \R^d}.
\]
Here, the $i$th row of $A_S\in \R^{m\times d}$  is defined as follows
\begin{equation*}
		(A_S)_i:=\left\{
		\begin{array}{cl}
			\va_i^\T, & i\in S,\\
			-\va_i^\T, & \mbox{otherwise}.
		\end{array}
		\right.
\end{equation*}
 Since the
set $\dkh{\vy\ge 0:\vy=A_{S}\vx, \vx\in \R^d}$ is convex, it means the best
approximation to  $\vb$ from $\dkh{\vy\ge 0:\vy=A_{S}\vx, \vx\in \R^d}$ is unique.
Note that the number $\abs{S} \le 2^m$. Hence, the number of the best approximations
to  $\vb$ from $\KK_A$ is at most $2^m$, which is finite. This completes the proof.
\end{proof}

\section{The conditions of uniqueness of solutions to (\ref{eq:least})}
In this section,  we focus on  Question II: {\em For which vector $\vb\in \R^m$ the
solution to (\ref{eq:least}) is unique?} We will present several sufficient
conditions for it. Throughout this section, we assume that $\HH=\R$.

\subsection{Almost all the vectors $\vb\in \R_+^m$. }
The following theorem shows that for almost all the vectors
$\vb\in \R_{+}^m$ the solution to (\ref{eq:least}) is unique provided $A\in
\R^{m\times d}$ has phase retrieval property.
\begin{theorem}\label{th:almost}
Suppose that $A\in \R^{m\times d}$ has phase retrieval property in $\R^d$. Then for
all vectors $\vb\in \R_{+}^m$ except  for a measure zero set,  the program
(\ref{eq:least}) has a unique solution.
\end{theorem}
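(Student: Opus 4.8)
The plan is to show that the ``bad'' set of vectors $\vb\in \R_+^m$ for which the solution is nonunique has Lebesgue measure zero. By Lemma \ref{le:one}, since $A$ has phase retrieval property, nonuniqueness of the solution to (\ref{eq:least}) is equivalent to $\vb$ having more than one best approximation from $\KK_A$. So I would characterize the set
\[
B:=\dkh{\vb\in \R_+^m : \#P_{\KK_A}(\vb)\ge 2}
\]
and prove that its Lebesgue measure is zero. The starting point is the decomposition already used in the previous theorem,
\[
\KK_A=\bigcup_{S\subset \dkh{1,\ldots,m}} C_S, \qquad C_S:=\dkh{\vy\ge 0:\vy=A_S\vx,\ \vx\in \R^d},
\]
a \emph{finite} union (at most $2^m$) of closed convex polyhedral cones, where each $C_S$ is a Chebyshev set by Lemma \ref{th:best}.

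The key observation is that $\vb$ has a nonunique best approximation from $\KK_A$ only if the two (or more) distinct best approximations lie in two \emph{different} pieces $C_S$ and $C_{S'}$; indeed, within any single $C_S$ the best approximation is unique by convexity. Writing $P_{C_S}(\vb)$ for the (unique) projection onto $C_S$ and $d(C_S,\vb)=\|\vb-P_{C_S}(\vb)\|$, the global best approximation distance is $d(\KK_A,\vb)=\min_S d(C_S,\vb)$, and nonuniqueness forces the minimum over $S$ to be attained by at least two indices $S\ne S'$ with $P_{C_S}(\vb)\ne P_{C_{S'}}(\vb)$. Hence
\[
B\subseteq \bigcup_{S\ne S'} E_{S,S'},\qquad E_{S,S'}:=\dkh{\vb\in \R_+^m : d(C_S,\vb)=d(C_{S'},\vb),\ P_{C_S}(\vb)\ne P_{C_{S'}}(\vb)}.
\]
Since this is a finite union, it suffices to show each $E_{S,S'}$ has measure zero. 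The plan here is to exploit that the projection onto a fixed polyhedral convex set is piecewise affine: partitioning $\R^m$ according to which face of $C_S$ (resp. $C_{S'}$) the projection lands on, on each cell $P_{C_S}$ and $P_{C_{S'}}$ are given by fixed affine maps, so $d(C_S,\vb)^2-d(C_{S'},\vb)^2$ is a (real-analytic, in fact piecewise-quadratic) function of $\vb$. The equality $d(C_S,\vb)=d(C_{S'},\vb)$ on a cell therefore defines the zero set of a nonconstant polynomial, provided the two squared-distance functions are not identically equal there; such a zero set has measure zero.

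The main obstacle I anticipate is ruling out the degenerate possibility that $d(C_S,\cdot)$ and $d(C_{S'},\cdot)$ coincide on a full-dimensional region while still giving distinct projections --- i.e. confirming that $d(C_S,\vb)^2-d(C_{S'},\vb)^2$ is genuinely nonconstant on each relevant cell. I would handle this by using the phase retrieval property: if the squared distances agreed on an open set, their gradients $2(\vb-P_{C_S}(\vb))$ and $2(\vb-P_{C_{S'}}(\vb))$ would agree there, forcing $P_{C_S}(\vb)=P_{C_{S'}}(\vb)$ throughout, which contradicts membership in $E_{S,S'}$. This reduces the problem to a countable (in fact finite) union of proper algebraic varieties, each of measure zero, so their union $B$ is null, establishing that (\ref{eq:least}) has a unique solution for almost every $\vb\in \R_+^m$.
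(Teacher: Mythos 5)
Your proposal is correct, but it follows a genuinely different route from the paper's proof of Theorem \ref{th:almost}. The paper never analyzes the metric projection onto $\KK_A$ at all: after orthonormalizing so that $A^\T A=I$, it invokes the fixed-point characterization of global minimizers (Lemma \ref{fixed-point real}, which is where the hypothesis $\vb\in\R_+^m$ enters), so that any two solutions have the closed forms $\vx_1=A^\T D_{S_1}\vb$ and $\vx_2=A^\T D_{S_2}\vb$; equality of the two objective values then collapses to the explicit identity $\norm{A^\T D_{S_1}\vb}^2=\norm{A^\T D_{S_2}\vb}^2$, and Lemma \ref{le:zeroset} uses the complement property to show that each homogeneous quadratic $P_{\epsilon',\epsilon}$ with $\epsilon\neq\pm\epsilon'$ is not identically zero, so the bad set lies in a finite union of null algebraic sets. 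You instead work on the best-approximation side through Lemma \ref{le:one}: you decompose $\KK_A$ into at most $2^m$ closed convex cones $C_S$ (the same decomposition the paper uses only for the finiteness theorem, not here), observe that nonuniqueness forces a tie $d(C_S,\vb)=d(C_{S'},\vb)$ with $P_{C_S}(\vb)\neq P_{C_{S'}}(\vb)$, and dispose of the degenerate full-dimensional ties via the gradient identity $\nabla\, d(C,\cdot)^2=2(\mathrm{id}-P_C)$, which would force the two projections to coincide on an open set. Both arguments are sound; two observations on the comparison. First, your degeneracy step is pure convex analysis --- contrary to your own labeling, the phase retrieval property plays no role in it; in your route PR enters only through Lemma \ref{le:one}, so you in fact establish the stronger statement that for an \emph{arbitrary} matrix $A$ the set of $\vb$ with nonunique best approximation from $\KK_A$ is null, and since your argument uses neither $\vb\ge 0$ nor the fixed-point lemma, it yields almost-everywhere uniqueness on all of $\R^m$, not just the nonnegative orthant (consistent with the classical fact that metric projections onto any closed set are a.e. single-valued, e.g. because $\vb\mapsto\norm{\vb}^2-d(\KK_A,\vb)^2$ is convex, hence a.e. differentiable). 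Second, what the paper's computation buys is an explicit algebraic description of the exceptional set as a subset of $\bigcup_{\epsilon\neq\pm\epsilon'}\dkh{\vb\in\R^m: P_{\epsilon',\epsilon}(\vb)=0}$, a concrete finite union of quadric hypersurfaces, which is more constructive than the abstract null set your argument produces.
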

Theorem \ref{th:almost} only considers the real case. We conjecture a similar result
holds for complex case:
\begin{conjecture}
Suppose that $A\in \C^{m\times d}$ has phase retrieval property in $\C^d$. Then for
all vectors $\vb\in \R^m$ except  for a measure zero set,  the program
(\ref{eq:least}) has a unique solution.
\end{conjecture}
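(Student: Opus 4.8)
The plan is to convert the uniqueness statement into a question about uniqueness of metric projections onto $\KK_A$ and then to exploit the finite convex decomposition of $\KK_A$ introduced in the previous proof. Since $A$ has the phase retrieval property, Lemma \ref{le:one} shows that \eqref{eq:least} has a unique solution at $\vb$ if and only if $\#P_{\KK_A}(\vb)=1$. Hence it suffices to prove that the set of $\vb\in\R^m$ having two or more best approximations from $\KK_A$ is Lebesgue-null; intersecting with $\R_{+}^m$ then yields the theorem. Writing $\KK_A=\bigcup_{S\subset\{1,\ldots,m\}}C_S$ with $C_S:=\{\vy\ge 0:\vy=A_S\vx,\ \vx\in\R^d\}$, each $C_S$ is a nonempty (it contains $0$) closed convex set, so by Lemma \ref{th:best} it is Chebyshev and its projection $P_{C_S}$ is single-valued and continuous.

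First I would observe that two distinct best approximations must come from two different pieces. Indeed, a global nearest point lying in $C_S$ is also the unique nearest point of $\vb$ in $C_S$, hence equals $P_{C_S}(\vb)$; so each piece supplies at most one candidate, and non-uniqueness forces a pair $S\neq S'$ with $d(C_S,\vb)=d(C_{S'},\vb)=d(\KK_A,\vb)$ and $P_{C_S}(\vb)\neq P_{C_{S'}}(\vb)$. Thus the ``bad'' set is contained in the finite union, over the at most $2^m$ pairs $S\neq S'$, of the loci $B_{S,S'}:=\{\vb: f_S(\vb)=f_{S'}(\vb),\ P_{C_S}(\vb)\neq P_{C_{S'}}(\vb)\}$, where $f_S(\vb):=d(C_S,\vb)^2$. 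It remains to show that each $B_{S,S'}$ is null.

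Here I would invoke the standard fact that the squared distance to a nonempty closed convex set is continuously differentiable on $\R^m$ with $\nabla f_S(\vb)=2(\vb-P_{C_S}(\vb))$. Then $h:=f_S-f_{S'}$ is $C^1$ with $\nabla h(\vb)=2\big(P_{C_{S'}}(\vb)-P_{C_S}(\vb)\big)$, so on $B_{S,S'}$ the projections differ and therefore $\nabla h\neq 0$. Consequently $B_{S,S'}$ lies in the regular part $\{h=0,\ \nabla h\neq 0\}$ of the zero level set of $h$, which by the implicit function theorem is locally the graph of a $C^1$ function of $m-1$ coordinates and hence has measure zero; a countable cover makes the full regular level set null, and the finite union over all pairs completes the proof.

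The step I expect to be the crux — rather than a routine verification — is this last one, where non-uniqueness of the projection is precisely what makes the two squared-distance functions disagree to first order and turns each tie locus into a regular hypersurface. Note that the gradient formula also shows that wherever $\nabla h=0$ one has $P_{C_S}(\vb)=P_{C_{S'}}(\vb)$, so the degenerate part of the level set contributes no genuine non-uniqueness; in particular this rules out any scenario in which $f_S\equiv f_{S'}$ on an open set while the projections stay distinct. An even shorter alternative is to note that $\KK_A$ is closed and apply the classical fact that, for any closed subset of $\R^m$, the set of points with a non-unique nearest point is Lebesgue-null (a consequence of Rademacher's theorem applied to the $1$-Lipschitz distance function); I would record this as a remark but keep the self-contained polyhedral argument as the main line.
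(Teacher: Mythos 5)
Your main argument does not apply to the statement at hand, which is the \emph{complex} case: the decomposition $\KK_A=\bigcup_{S\subset\{1,\ldots,m\}}C_S$ with $C_S=\{\vy\ge 0:\vy=A_S\vx,\ \vx\in\R^d\}$ is a purely real phenomenon. It rests on the identity $\abs{\innerp{\va_i,\vx}}=\pm\innerp{\va_i,\vx}$, which fails for $\HH=\C$: there the missing phases $\innerp{\va_i,\vx}/\abs{\innerp{\va_i,\vx}}$ range over the whole unit circle, so $\KK_A=\{\abs{A\vx}:\vx\in\C^d\}$ is not a finite union of convex cones --- indeed the paper's own non-convexity proof (Lemma \ref{th:nonc}) must treat the complex case by a dimension count on phase manifolds precisely because no finite sign decomposition exists. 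Consequently the reduction of the bad set to finitely many tie loci $B_{S,S'}$, and the $C^1$/implicit-function-theorem machinery built on it, prove nothing about $\C^d$. What you have actually written is an alternative --- and, as far as I can check, correct --- proof of the real-case Theorem \ref{th:almost}, genuinely different from the paper's (which uses the fixed-point equation of Lemma \ref{fixed-point real} to show that every nonuniqueness vector $\vb$ satisfies one of finitely many nontrivial polynomial equations $P_{\epsilon',\epsilon}(\vb)=0$). For the complex statement itself the paper offers no proof at all: it is stated there as an open conjecture, so there is no paper argument yours could be matching.

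Ironically, the one sentence you demote to a closing remark is the only part of your proposal that attacks the conjecture, and it very nearly succeeds: the classical fact that for a nonempty closed set $K\subset\R^m$ the set of points with non-unique nearest point in $K$ is Lebesgue-null (via Rademacher applied to the $1$-Lipschitz function $d(K,\cdot)$, or via a.e.\ differentiability of the convex function $\vb\mapsto\|\vb\|^2-d(K,\vb)^2$) uses no convex structure on $\KK_A$ whatsoever, and it combines with Lemma \ref{le:one}, which is stated for both $\HH=\R$ and $\HH=\C$. The missing step is the verification that $\KK_A$ is closed in the complex case, which you assert without argument; it does hold here, since the phase retrieval property forces $A$ to be injective, whence $\|\abs{A\vx}\|=\|A\vx\|\ge\sigma_{\min}(A)\|\vx\|$ with $\sigma_{\min}(A)>0$, so any sequence $\abs{A\vx_n}\to\vy$ has $(\vx_n)$ bounded and a subsequential limit $\vx^*$ with $\abs{A\vx^*}=\vy$ (closedness is also needed for best approximations to exist at all, hence for Lemma \ref{le:one} to be usable). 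If you promote this route to the main line and supply the closedness lemma, you obtain a complete proof of the conjecture, in fact for all $\vb\in\R^m$; as submitted, with the polyhedral argument as the main line and the correct idea left as an optional aside, the proof fails for $\C$.
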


 Before presenting the proof of Theorem \ref{th:almost},
 we introduce the following lemma, which will be used in the proof
of Theorem \ref{th:almost}.
\begin{lemma}\label{le:zeroset}
Assume that $A:=[\va_1,\ldots,\va_m]^\T\in \R^{m\times d}$ has phase retrieval
property.
 Set
\[
P_{\epsilon',\epsilon}(\vz):=
\norm{A^\T D_{\epsilon'}\vz}^2-\norm{A^\T D_\epsilon\vz}^2,
\]
where $\vz\in \R^m$, $\epsilon,\epsilon'\in \{1,-1\}^m$, $D_\epsilon={\rm
Diag}(\epsilon_1,\ldots,\epsilon_m)$ and $D_{\epsilon'}={\rm
Diag}(\epsilon'_1,\ldots,\epsilon'_m)$. Then $P_{\epsilon,\epsilon'}(\vz)\not\equiv0$
provided $\epsilon\neq \pm \epsilon' $.
\end{lemma}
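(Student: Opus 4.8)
The plan is to recognize $P_{\epsilon',\epsilon}$ as a quadratic form in $\vz$ and reduce the whole statement to a statement about one entry of its (symmetric) matrix. Writing $M:=AA^\T\in\R^{m\times m}$, which is symmetric with entries $M_{jk}=\langle\va_j,\va_k\rangle$, and using $\|A^\T D_\epsilon\vz\|^2=(D_\epsilon\vz)^\T M(D_\epsilon\vz)=\vz^\T D_\epsilon M D_\epsilon\vz$, I would expand
\[
P_{\epsilon',\epsilon}(\vz)=\vz^\T\big(D_{\epsilon'}MD_{\epsilon'}-D_\epsilon MD_\epsilon\big)\vz=:\vz^\T Q\vz ,
\]
where $Q$ is symmetric with entries $Q_{jk}=(\epsilon'_j\epsilon'_k-\epsilon_j\epsilon_k)\langle\va_j,\va_k\rangle$. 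Since $\epsilon_j,\epsilon'_j\in\{1,-1\}$ we have $\epsilon_j^2=(\epsilon'_j)^2=1$, so every diagonal entry $Q_{jj}$ vanishes. Because the quadratic monomials $z_jz_k$ are linearly independent and $Q$ is symmetric, $P_{\epsilon',\epsilon}$ vanishes identically if and only if $Q_{jk}=0$ for all $j\neq k$. Thus it suffices to exhibit one pair $j\neq k$ with $Q_{jk}\neq0$.

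Next I would translate the hypothesis $\epsilon\neq\pm\epsilon'$. Setting $\delta:=\epsilon\odot\epsilon'\in\{1,-1\}^m$, a short computation gives $\epsilon'_j\epsilon'_k-\epsilon_j\epsilon_k=\epsilon'_j\epsilon'_k(1-\delta_j\delta_k)$, which is nonzero precisely when $\delta_j\neq\delta_k$. Moreover $\epsilon=\epsilon'$ (resp.\ $\epsilon=-\epsilon'$) is equivalent to $\delta\equiv1$ (resp.\ $\delta\equiv-1$), so $\epsilon\neq\pm\epsilon'$ means exactly that $\delta$ is nonconstant. Hence the index sets $S:=\{j:\delta_j=1\}$ and $S^c=\{j:\delta_j=-1\}$ are both nonempty, and for $j\in S$, $k\in S^c$ one has $Q_{jk}=2\epsilon'_j\epsilon'_k\langle\va_j,\va_k\rangle$, which is nonzero if and only if $\langle\va_j,\va_k\rangle\neq0$. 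So the claim $P_{\epsilon',\epsilon}\not\equiv0$ is equivalent to the existence of $j\in S$ and $k\in S^c$ with $\langle\va_j,\va_k\rangle\neq0$.

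Finally I would argue by contradiction using the complement property (Lemma~\ref{le:comp}). Suppose instead that $\langle\va_j,\va_k\rangle=0$ for every $j\in S$ and $k\in S^c$; then $\spann\{\va_j:j\in S\}$ and $\spann\{\va_k:k\in S^c\}$ are orthogonal subspaces of $\R^d$. Applying the complement property to the set $S$, either $\spann\{\va_j:j\in S\}=\R^d$ or $\spann\{\va_k:k\in S^c\}=\R^d$. In the first case orthogonality forces $\spann\{\va_k:k\in S^c\}\subseteq(\R^d)^\perp=\{0\}$, i.e.\ $\va_k=0$ for all $k\in S^c$; since $S^c\neq\emptyset$ and the measurement vectors are nonzero, this is impossible, and the second case is symmetric. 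This contradiction proves $P_{\epsilon',\epsilon}\not\equiv0$. The main obstacle is precisely this last step: it is the only place where the phase retrieval property (through the complement property) enters, and one must also keep in mind that the conclusion genuinely relies on every $\va_j$ being nonzero — a zero row would make the statement fail, so the nonvanishing of the $\va_j$ (standard under the phase retrieval property) is essential.
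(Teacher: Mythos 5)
Your proof is correct and takes essentially the same route as the paper's: both reduce $P_{\epsilon',\epsilon}$ to its cross terms over the partition where $\epsilon\odot\epsilon'$ changes sign (your $S$ versus $S^c$ is exactly the paper's $I$ versus $I^c$ after its normalization $\epsilon'=(1,\ldots,1)$, under which $\delta=\epsilon$), and both then invoke the complement property (Lemma~\ref{le:comp}) to produce $j\in S$, $k\in S^c$ with $\langle\va_j,\va_k\rangle\neq 0$ --- the paper exhibits the explicit witness $\vz_0$ with entries $1$ and $-1$ at positions $1\in I$ and $j_0\in I^c$, while you argue by contradiction through orthogonal complements, a purely cosmetic difference. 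One correction to your closing remark: the phase retrieval property does \emph{not} by itself force $\va_j\neq 0$ (a zero row is simply uninformative and is compatible with the complement property), so the nonvanishing of the rows is a tacit extra hypothesis at this point --- and you are in good company, since the paper's own proof makes the same silent assumption when it asserts the existence of $j_0\in I^c$ with $\langle\va_1,\va_{j_0}\rangle\neq 0$, which requires $\va_1\neq 0$.
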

\begin{proof}
 A simple observation is that $P_{\epsilon',\epsilon}$ is a  polynomial
with respect to $\vz\in \R^m$. To obtain the conclusion, it is enough to show
$P_{\epsilon',\epsilon}(z_0)\neq 0$ for some $\vz_0\in \R^m$. Without loss of
generality, we assume that $\epsilon'=\ve:=(1,\ldots,1)$. Hence, $\epsilon\neq \pm
\ve$. For convenience, we set $P_\epsilon(\vz):=P_{\ve,\epsilon}(\vz)$.
A simple calculation leads to
\[
P_\epsilon(\vz)=\sum_{j\neq k}z_jz_k \innerp{\va_j,\va_k}-
\sum_{j\neq k}\epsilon_j\epsilon_kz_jz_k \innerp{\va_j,\va_k},
\]
where $\vz=(z_1,\ldots,z_m)\in \R^m$. Set $I:=\{j:\epsilon_j=1\}$ and
$I^c=\{1,\ldots,m\}\setminus I$. Then $I\neq \emptyset$ and $I^c\neq \emptyset$ due to
$(\epsilon_1,\ldots,\epsilon_m)\neq \pm (1,\ldots,1) $.
Since the matrix $A$ has phase retrieval property, according to Lemma
\ref{le:comp}, we have ${\rm span}\{\va_j:j\in I\}=\R^d$ or ${\rm span}\{\va_j:j\in
I^c\}=\R^d$. Without loss of generality, we assume that ${\rm span}\{\va_j:j\in
I^c\}=\R^d$ and that $\epsilon_1=1$.  Since ${\rm span}\{\va_j:j\in I^c\}=\R^d$,
  there exists $j_0\in I^c$ such that
$\innerp{\va_1,\va_{j_0}}\neq 0$. Noting that $j_0\in I^c$, we have
$\epsilon_1\epsilon_{j_0}=-1$. Set
$\vz_0:=(1,0,\ldots,0,\underbrace{-1}_{j_0},0,\ldots,0)\in \R^m$. Then
$\abs{P_\epsilon(\vz_0)}=2\abs{\innerp{\va_1,\va_{j_0}}}\neq 0$ which implies
$P_\epsilon(\vz)\not\equiv 0$. More precisely,  $P_\epsilon(\vz)$ is a nonzero homogeneous polynomial
with respect to $\vz\in \R^m$.
\end{proof}

We are now ready to prove the main result in this subsection.
\begin{proof}[Proof of Theorem \ref{th:almost}]
Since $A$ has phase retrieval property, it then follows from Lemma
\ref{le:comp} that ${\rm rank}(A)=d$. We assume that the
singular decomposition of $A$ is $A=UDV^\T$ where $D \in \R^{d\times d}$ is an
invertible diagonal matrix. Set $\tA:=AVD^{-1}$ and $\tx:=D V^\T \vx$. Note that
\[
\norm{\abs{A\vx}-\vb}=\norm{\abs{\tA\tx}-\vb}.
\]
Hence,  the program (\ref{eq:least}) has a unique solution iff $\argmin{\vx\in
\R^d}\norm{\abs{\tA\vx}-\vb}$ has a unique solution.  Observe that $\tA^\T\tA=I$. So,
to this end, it is enough to  consider the case where  $A^\T A =I$.

Assume that $\vb_0\in \R^m$ so that $\Phi_A(\vb_0)$ contains at least two elements,
i.e., the program (\ref{eq:least}) has at least two solutions for the vector $\vb_0$.
 We claim that $\vb_0 $ satisfies $P_{\epsilon',\epsilon}(\vb_0)=0$ for some $\epsilon', \epsilon \in \{-1,1\}^m$ with $\epsilon\neq
\pm \epsilon'$, where
\begin{equation*}
P_{\epsilon',\epsilon}(\vz):=\norm{A^\T D_{\epsilon'} \vz}^2-\norm{A^\T
D_\epsilon\vz}^2.
\end{equation*}
 According to Lemma \ref{le:zeroset}, $P_{\epsilon', \epsilon}(\vb_0)$ is a nonzero homogeneous polynomial
with respect to $\vb_0 \in \R^m$. It means that the Lebesgue  measure of the set
$\bigcup_{\epsilon\neq \pm\epsilon' }\{\vb_0 \in \R^d:P_{\epsilon',\epsilon}(\vb_0)=0\}$
is 0. This establishes the conclusion.

It remains to prove the claim that  $P_{\epsilon',\epsilon}(\vb_0)=0$.
 Assume that $\vx_1$ and $\vx_2$ are two global
solutions to (\ref{eq:least}) with $\underline{\vx_1}\neq \underline{\vx_2}$, namely, $\vx_1 \neq \pm \vx_2$. Set
\[
S_1:=\dkh{i: \va_i^\T \vx_1 \ge 0} \qquad \mbox{and} \qquad S_2:=\dkh{i: \va_i^\T \vx_2 \ge 0}.
\]
According to Lemma \ref{fixed-point real}, we obtain
\[
\vx_1=A^\T D_{S_1}\vb_0 \qquad \mbox{and} \qquad   \vx_2=A^\T D_{S_2}\vb_0.
\]
Here, $D_S:={\rm Diag}(\delta_S(1),\ldots,\delta_S(m))\in {\mathbb R}^{m\times m}$,
\begin{equation*}
		\delta_S(j):=\left\{
		\begin{array}{cl}
			1, & j\in S,\\
			-1, & \mbox{otherwise}.
		\end{array}
		\right.
\end{equation*}
 Using the notations above, we have
\[
\abs{A\vx_1}=D_{S_1}(AA^\T D_{S_1}\vb_0) \qquad \mbox{and} \qquad
 \abs{A\vx_2}=D_{S_2}(AA^\T D_{S_2}\vb_0).
\]
Since  $\vx_1$ and $\vx_2$ are two global solutions to (\ref{eq:least}),  it gives
\[
\norm{D_{S_1}(AA^\T D_{S_1}\vb_{S_1})-\vb_0}^2=\norm{D_{S_2}(AA^\T D_{S_2}\vb_{S_2})-\vb_0}^2,
\]
which is equivalent to
\begin{equation} \label{eq:equilvalue0}
\norm{AA^\T D_{S_1}\vb_0-D_{S_1}\vb_0}^2=\norm{AA^\T D_{S_2}\vb_0-D_{S_2}\vb_0}^2.
\end{equation}
A simple calculation shows that
\begin{equation}\label{eq:AA}
\begin{aligned}
\norm{AA^\T D_{S_1}\vb_0-D_{S_1}\vb_0}^2&=\nj{AA^\T D_{S_1}\vb_0,AA^\T D_{S_1}\vb_0}
-2\nj{AA^\T D_{S_1}\vb_0,D_{S_1}\vb_0}+\norm{D_{S_1}\vb_0}^2 \\
&= \nj{A^\T AA^\T D_{S_1}\vb_0,A^\T D_{S_1}\vb_0}-2\nj{A^\T D_{S_1}\vb_0,A^\T D_{S_1}\vb_0}
+\norm{D_{S_1}\vb_0}^2 \\
&=  -\norm{A^\T D_{S_1}\vb_0}^2+\norm{\vb_0}^2,
\end{aligned}
\end{equation}
where we use the fact $A^\T A =I$. Combining (\ref{eq:equilvalue0}) and
(\ref{eq:AA}), we obtain
\begin{equation} \label{eq:keyformula}
\norm{A^\T D_{S_1}\vb_0}^2=\norm{A^\T D_{S_2}\vb_0}^2.
\end{equation}
Take $\epsilon'=(\delta_{S_1}(1),\ldots,\delta_{S_1}(m))$ and
$\epsilon=(\delta_{S_2}(1),\ldots,\delta_{S_2}(m))$.  Since $\vx_1\neq \pm \vx_2$, we
know $\epsilon\neq \pm \epsilon' $.
From  (\ref{eq:keyformula}), we have $P_{\epsilon',\epsilon}(\vb_0)=0$, as claimed.
\end{proof}

\subsection{The $\vb\in \R^m$ is  close to the set $\KK_A$.}
In this subsection, we show if the vector $\vb$ is  close to the set $\KK_A=\{\vy=\abs{A\vx} :
\vx\in \R^d\}$ then the solution to (\ref{eq:least}) is unique. For convenience, we
introduce the definition of strong complement property which was firstly introduced
in \cite{bandeira2014saving} (see also \cite{SRIP}).
\begin{definition} \cite[Definition 17]{bandeira2014saving}
We say the matrix $A\in \R^{m\times d}$satisfies the $\sigma$-strong complement property if
\[
\max\dkh{\lambda_{\min} (A_{[\Gamma]}^\T A_{[\Gamma]}), \lambda_{\min} (A_{[\Gamma^c]}^\T A_{[\Gamma^c]})} \ge \sigma^2
\]
for every $\Gamma\subset\dkh{1,\ldots,m}$, where $A_{[\Gamma]}:=[\va_j:j\in
\Gamma]^\T$ denotes the sub-matrix of $A$.
\end{definition}

We next present a sufficient condition under which  the solution to (\ref{eq:least})
is unique.
\begin{theorem} \label{th:clouniq}
Assume that $A\in \R^{m\times d}$ has $\beta\sigma_{\max}(A)$-strong complement
property for some $\beta\in (0,1)$, where $\sigma_{\max}(A)$ is the largest singular
value of $A$. Suppose $\vb=\abs{A\vx_0}+\eta\in {\mathbb R}_+^m$ for some $\vx_0\in
\R^d$ and $\eta \in \R^m$.  If $\norm{\eta}\le \beta^2\lambda$ then the program
(\ref{eq:least}) has a unique solution where  $\lambda:=\min_i\dkh{\abs{A\vx_0}_i:
i=1,\ldots,m}$.
\end{theorem}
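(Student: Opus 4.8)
The plan is to reduce uniqueness of the minimizer to a statement about the optimal sign pattern, and then to use the strong complement property as a quantitative stability estimate. First I would note that the strong complement property, applied to the subset $\Gamma=\{1,\dots,m\}$, forces $\mathrm{span}\{\va_1,\dots,\va_m\}=\R^d$, so by Lemma \ref{le:comp} the matrix $A$ has phase retrieval property; Lemma \ref{le:one} then reduces the claim to showing that the best approximation $P_{\KK_A}(\vb)$ is unique. Writing $\epsilon^0:=\s(A\vx_0)$ and $D_0:=\diag(\epsilon^0)$, I record that $\abs{A\vx_0}=D_0A\vx_0$ and that $\KK_A$ is a finite union of convex cones $C_\epsilon=\{D_\epsilon A\vx\ge0:\vx\in\R^d\}$, one per sign pattern $\epsilon\in\{\pm1\}^m$. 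The strategy is to show that every minimizer $\x$ of (\ref{eq:least}) satisfies $\s(A\x)=\pm\epsilon^0$: once all minimizers (after adjusting the global sign) lie in the single region $\{\vx:\s(A\vx)=\epsilon^0\}$, the objective there equals $\norm{D_0A\vx-\vb}^2$, which is strictly convex in $\vx$ since $A$ has full column rank; minimizing a strictly convex function over a convex region gives a unique point, hence a unique solution up to sign.

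The quantitative engine is a lower Lipschitz bound for the phaseless map. For any $\vx,\vy\in\R^d$, splitting indices into $\Gamma:=\{i:\s(\va_i^\T\vx)=\s(\va_i^\T\vy)\}$ and $\Gamma^c$ gives the exact identity $\norm{\abs{A\vx}-\abs{A\vy}}^2=\norm{A_{[\Gamma]}(\vx-\vy)}^2+\norm{A_{[\Gamma^c]}(\vx+\vy)}^2$, since on $\Gamma$ the magnitudes differ by $\va_i^\T(\vx-\vy)$ and on $\Gamma^c$ by $\va_i^\T(\vx+\vy)$. The strong complement property guarantees that one of $A_{[\Gamma]},A_{[\Gamma^c]}$ has least singular value at least $\beta\sigma_{\max}(A)$, so $\norm{\abs{A\vx}-\abs{A\vy}}\ge\beta\sigma_{\max}(A)\,\norm{\underline{\vx}-\underline{\vy}}$. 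Taking $\Gamma=\{1,\dots,m\}$ also yields $\sigma_{\min}(A)\ge\beta\sigma_{\max}(A)$, which I will need for the perturbation bounds.

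Next I would carry out the sign-matching step. Feasibility of $\vx_0$ and optimality of $\x$ give $\norm{\abs{A\x}-\vb}\le\norm{\eta}$, hence $\norm{\abs{A\x}-\abs{A\vx_0}}\le2\norm{\eta}$, and the lower Lipschitz bound yields $\norm{\underline{\x}-\underline{\vx_0}}\le 2\norm{\eta}/(\beta\sigma_{\max}(A))$. Choosing the global sign of $\x$ to realize this distance, each coordinate satisfies $\abs{\va_i^\T(\x-\vx_0)}\le\sigma_{\max}(A)\norm{\x-\vx_0}\le 2\norm{\eta}/\beta$; since $\abs{\va_i^\T\vx_0}\ge\lambda$, once the right-hand side is below $\lambda$ the sign of $\va_i^\T\x$ agrees with that of $\va_i^\T\vx_0$ for every $i$, i.e. $\s(A\x)=\epsilon^0$, closing the argument via the first paragraph. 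A more efficient route, which is the one I expect to deliver the stated threshold, compares objective values directly: by Lemma \ref{fixed-point real} a minimizer obeys $\x=(A^\T A)^{-1}A^\T D_{\s(A\x)}\vb$, so $\norm{\abs{A\x}-\vb}^2=\norm{(I-AA^{+})D_{\s(A\x)}\vb}^2$, and if $\s(A\x)=\epsilon\neq\pm\epsilon^0$ one gets $\norm{(I-AA^{+})D_\epsilon\vb}\ge 2\,\dist((A\vx_0)|_{\Gamma^c},\mathrm{range}\,A)-\norm{\eta}$; the strong complement property bounds that distance below by $\tfrac{\beta}{\sqrt{1+\beta^2}}\lambda$ through a generalized-eigenvalue estimate, contradicting optimality when $\norm{\eta}$ is small.

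The main obstacle is precisely this constant bookkeeping: the naive chain in the third paragraph only secures uniqueness for $\norm{\eta}$ of order $\beta\lambda$, and extracting the sharp admissible level $\beta^2\lambda$ for all $\beta\in(0,1)$ requires using the optimality of $\x$ more tightly than the crude triangle inequality, e.g. through the distance-to-range estimate above, where the ratio $(\beta\sigma_{\max}(A))^{2}/\sigma_{\max}(A)^{2}=\beta^2$ enters naturally. The remaining ingredients—the reduction through Lemma \ref{le:one}, the cone decomposition, and the final strict-convexity argument—are routine.
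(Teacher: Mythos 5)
Your architecture is reasonable and your first route is essentially correct, but it is honest about its own limitation and that limitation is fatal to the stated theorem: the chain ``optimality $\Rightarrow \norm{\abs{A\x}-\abs{A\vx_0}}\le 2\norm{\eta} \Rightarrow \norm{\underline{\x}-\underline{\vx_0}}\le 2\norm{\eta}/(\beta\sigma_{\max}(A)) \Rightarrow$ coordinatewise sign agreement'' needs $2\norm{\eta}/\beta<\lambda$, i.e.\ $\norm{\eta}<\beta\lambda/2$, which contains the hypothesis $\norm{\eta}\le\beta^2\lambda$ only when $\beta<1/2$. Everything for $\beta\in[1/2,1)$ therefore rests on your second route, and that route is not established: the lower bound $\dist\bigl((A\vx_0)|_{\Gamma^c},\mathrm{range}\,A\bigr)\ge \tfrac{\beta}{\sqrt{1+\beta^2}}\,\lambda$ is asserted without derivation (``a generalized-eigenvalue estimate''), and even granting it, the constant is too small: the contradiction with optimality requires $\norm{\eta}<\tfrac{\beta}{\sqrt{1+\beta^2}}\lambda$, while $\beta^2>\tfrac{\beta}{\sqrt{1+\beta^2}}$ whenever $\beta^4+\beta^2-1>0$, i.e.\ $\beta\gtrsim 0.786$. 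So as written the proposal does not prove uniqueness for all $\beta\in(0,1)$ under $\norm{\eta}\le\beta^2\lambda$. A smaller slip: the phase retrieval property does not follow from the strong complement property applied only to $\Gamma=\{1,\dots,m\}$ (that gives merely $\rank(A)=d$); you need it for \emph{every} $\Gamma$, which then yields the complement property and hence Lemma \ref{le:comp} applies.

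The missing idea is the paper's normalization step: via the SVD one may replace $A$ by $\tA=AVD^{-1}$ with $\tA^\T\tA=I$, which converts $\beta\sigma_{\max}(A)$-strong complement property into $\beta$-strong complement property while leaving $\vb$, $\eta$ and $\lambda$ unchanged. The paper then compares distances from $\vb$ to the convex pieces $\KK_S=\{D_SA\vx\ge 0\}$: it shows $d(\KK_{S^*},\vb)\le\norm{\eta}$ (the subspace projection of $\vb$ stays in $\R_+^m$ because $\norm{\eta}\le\lambda$), and for $S\ne S^*$ with disagreement set $\Gamma$ it uses $\norm{A_{[\Gamma]}^\T A_{[\Gamma]}}=\norm{I-A_{[\Gamma^c]}^\T A_{[\Gamma^c]}}\le 1-\beta^2$ to get $d(\KK_S,\vb)\ge 2\beta^2\lambda-\norm{\eta}$, and Lemma \ref{le:one} finishes. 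Your fixed-point route is in fact salvageable once this normalization is in place, and then it even beats the paper's constant: with $A^\T A=I$ the matrix $I-AA^\T$ is the orthogonal projector onto $(\mathrm{range}\,A)^{\perp}$, so $\norm{(I-AA^\T)(A\vx_0)_{[\Gamma]}}^2=\norm{(A\vx_0)_{[\Gamma]}}^2-\norm{A_{[\Gamma]}^\T (A\vx_0)_{[\Gamma]}}^2\ge \beta^2\norm{(A\vx_0)_{[\Gamma]}}^2\ge\beta^2\lambda^2$, whence any minimizer with sign pattern $\epsilon\ne\pm\epsilon^0$ has objective at least $2\beta\lambda-\norm{\eta}>\norm{\eta}$, a contradiction already for $\norm{\eta}<\beta\lambda$ (one also needs the swap $\epsilon\to-\epsilon$, as in the paper's replacement of $S$ by $S^c$, to place the well-conditioned block on $\Gamma^c$). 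What your write-up lacks, concretely, is this normalization together with the orthogonal-projection identity in place of the unproven $\beta/\sqrt{1+\beta^2}$ estimate; without them the proof covers only part of the stated range of $\beta$.
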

\begin{proof}
We first consider the case where  $A^\T A =I$. Note that  $\sigma_{\max}(A)=1$.  Thus the matrix $A$ has $\beta$-strong complement property.
It then follows from  Lemma \ref{le:comp} that $A$ has phase retrievable property.
To obtain the conclusion, according to Lemma \ref{le:one}, it is enough  to show the best approximation to $\vb$ from $\KK_A$ is
unique.

Recognize that $\vb=\abs{A\vx_0}+\eta$.
Let  $S^*:=\dkh{i: \va_i^\T \vx_0 \ge 0}$. Then
\begin{equation}\label{eq:bAS}
\vb=D_{S^*}A\vx_0+\eta.
\end{equation}
Here, $D_{S^*}:={\rm Diag}(\delta_{S^*}(1),\ldots,\delta_{S^*}(m))\in {\mathbb
R}^{m\times m}$ with
\begin{equation*}
		\delta_{S^*}(j):=\left\{
		\begin{array}{cl}
			1, & j\in S^*, \\
			-1, & \mbox{otherwise}.
		\end{array}
		\right.
\end{equation*}
 For convenience, we set  $\KK_S:=\dkh{\vy \in \R^m :\vy=D_SA\vx, \vx\in \R^d}\cap
\R_+^m$.
A simple observation  is that
\[
\KK_A=\dkh{\vy:\vy=\abs{A\vx}, \vx\in \R^d}=\bigcup_{S\subset \dkh{1,\ldots,m}}
\KK_S.
\]
 We claim that,  for any $S\subset \{1,\ldots,m\}$ with $S\neq S^*$, we have
\begin{equation} \label{cla:un}
d(\KK_{S^*},\vb)\,\, <\,\, d(\KK_S,\vb),
\end{equation}
which implies  that the best approximation to $\vb$ from $\KK_A$ is unique. We arrive
at the conclusion.

We next prove the claim \eqref{cla:un}. Let  $P_S(\vb) \in \R^m$ be the projection of
vector $\vb\in\R^m$ onto the subspace $\dkh{\vy\in\R^m:\vy=D_{S}A\vx,  \vx \in
\R^d}$. Note that $A^\T A=I$. A simple calculation leads to
 $$P_S(\vb)=D_SA (D_SA)^\T\vb.$$
For the set $S^*$, we have
\begin{equation*}
\begin{aligned}
P_{S^*}(\vb)&=D_{S^*}A (D_{S^*}A)^\T\vb=D_{S^*}A(D_{S^*}A)^\T(D_{S^*}A\vx_0+\eta)\\
&=D_{S^*}A\vx_0+D_{S^*}A(D_{S^*}A)^\T\eta.
\end{aligned}
\end{equation*}
A simple observation is that the subspace $\dkh{\vy\in\R^m:\vy=D_{S^*}A\vx,  \vx \in
\R^d}$ contains the point $D_{S^*}A\vx_0\in \R_+^m$, which implies the set
$\dkh{\vy\in \R^m:\vy=D_{S^*}A\vx}\cap \R_+^m$ is non-empty. We next prove
$P_{S^*}(\vb)\in \KK_{S^*}$. To this end, we only need to show $P_{S^*}(\vb)\in
\R_+^m$.  Note that
\begin{equation}\label{eq:PSb}
P_{S^*}(\vb) =D_{S^*}A \vx_0+ D_{S^*}A (D_{S^*}A)^\T \eta =\abs{A \vx_0}+ D_{S^*}A (D_{S^*}A)^\T \eta.
\end{equation}
  It is sufficient to prove $\norm{D_{S^*}A (D_{S^*}A)^\T \eta}_{\infty} \le \lambda:=\min_{i} \dkh{\abs{A \vx_0}_i}$.
Note that
\begin{equation*}
\begin{aligned}
\norm{D_{S^*}A (D_{S^*}A)^\T \eta}_{\infty}& \le \max_{1\le i \le m}
\norms{(D_{S^*}A (D_{S^*}A)^\T)_{i}}\norms{\eta}\\
&=
\norms{\eta} \cdot \max_{1\le i \le m}  (\sum_{k=1}^m \abs{\nj{\va_k,\va_i}}^2)^{1/2}
=\max_{1\le i \le m}  \norms{\va_i} \norms{\eta} ,
\end{aligned}
\end{equation*}
where $(D_{S^*}A (D_{S^*}A)^\T)_{i}$ denotes the $i$-th row of $D_{S^*}A
(D_{S^*}A)^\T$ and the last equality follows from $A^\T A= I$. Since the columns of
$A$ are orthonormal, we have $\norms{\va_i}^2  \le 1$ for all $i$. Thus,
\[
\norm{D_{S^*}A (D_{S^*}A)^\T \eta}_{\infty} \le \norm{\eta} \le \lambda,
\]
where we use the condition $\norm{\eta} \le \beta^2 \lambda$ with $\beta\le 1$ in the
last inequality. This immediately gives $P_{S^*}(\vb)\in \R_+^m $.

Combining  (\ref{eq:bAS}), (\ref{eq:PSb}) and $P_{S^*}(\vb)\in \KK_{S^*}$, we have
\begin{equation} \label{eq:hsstar}
d(\KK_{S^*},\vb)=\norm{\vb-P_{S^*}(\vb)}=\norm{(I-D_{S^*}A(D_{S^*}A)^\T)\eta} \le \norm{\eta},
\end{equation}
where the inequality comes from the fact that $I- D_{S^*}A(D_{S^*}A)^\T$ is an orthogonal projection matrix.

Next,  we turn to evaluate $d(\KK_S,\vb)$. For any fixed $S\subset \dkh{1,\ldots,m}$,
define $\Gamma:=(S \backslash S^*) \cup (S^* \backslash S)$.
 If $S\neq S^*$ then $\Gamma\neq \emptyset$.
Since $A$ has $\beta$-strong complement property, we have
\begin{equation}\label{eq:strcompp}
\max\dkh{\lambda_{\min} (A_{[\Gamma]}^\T A_{[\Gamma]}), \lambda_{\min} (A_{[\Gamma^c]}^\T A_{[\Gamma^c]})} \ge \beta^2.
\end{equation}
Without loss of generality, we assume  $\lambda_{\min} (A_{[\Gamma^c]}^\T
A_{[\Gamma^c]}) \ge \beta^2$. Note that the subspaces spanned by $D_SA$ and
$D_{S^c}A$ are the same. Hence,  if $\lambda_{\min} (A_{[\Gamma]}^\T A_{[\Gamma]})
\ge \beta^2$ then we only need to replace the subset $S$ and $\Gamma$ by $S^c$ and
$\Gamma^c:=(S^c \backslash S^*) \cup (S^* \backslash S^c)$, respectively. Recall that
\[
P_S(\vb)=D_SA(D_SA)^\T(D_{S^*}A\vx_0+\eta).
\]
A simple observation is
\[
(D_SA)^\T D_{S^*}A=I-2A_{[\Gamma]}^\T A_{[\Gamma]},
\]
where $(A_{[\Gamma]})_i=\va_i^\T$ if $i\in \Gamma$ and $(A_{[\Gamma]})_i=\bm{0}$ if $i\notin \Gamma$.
Then
\begin{equation*}
\begin{aligned}
P_S(\vb)&=D_SA(D_SA)^\T(D_{S^*}A\vx_0+\eta)=D_SA(I-2A_{[\Gamma]}^\T A_{[\Gamma]})\vx_0
+D_SA(D_SA)^\T\eta\\
&= D_SA\vx_0 - 2 D_SA A_{[\Gamma]}^\T A_{[\Gamma]} \vx_0+D_SA(D_SA)^\T\eta \\
&= D_{S^*}A \vx_0 -2  D_{S^*}A_{{[\Gamma]}}\vx_0- 2 D_SA A_{[\Gamma]}^\T A_{[\Gamma]} \vx_0
+D_SA(D_SA)^\T\eta,
\end{aligned}
\end{equation*}
 which implies
\begin{equation}\label{eq:hs}
\begin{aligned}
d(\KK_S,\vb)&\ge \norm{\vb-P_S(\vb)}=\norm{(I-D_SA(D_SA)^\T)\eta+
2  D_{S^*}A_{{[\Gamma]}}\vx_0+2 D_SA A_{[\Gamma]}^\T A_{[\Gamma]} \vx_0} \\
&\ge  2\norm{D_{S^*}A_{{[\Gamma]}}\vx_0+ D_SA A_{[\Gamma]}^\T A_{[\Gamma]} \vx_0}-
\norm{(I-D_SA(D_SA)^\T)\eta}\\
&= 2\norm{D_{S^*}A_{{[\Gamma]}}\vx_0+ D_SA (D_{S^*}A_{{[\Gamma]}})^\T D_{S^*}A_{{[\Gamma]}} \vx_0}
-\norm{(I-D_SA(D_SA)^\T)\eta}.
\end{aligned}
\end{equation}
Let $\hat{\vb}:=\abs{A\vx_0}$. Then
$D_{S^*}A_{{[\Gamma]}}\vx_0=\hat{\vb}_{[\Gamma]}$. Noting that
$D_S A_{{[\Gamma]}}=-D_{S^*} A_{{[\Gamma]}}$, we have
\begin{equation} \label{eq:medlow}
\begin{aligned}
\norm{D_{S^*}A_{{[\Gamma]}}\vx_0+ D_SA (D_{S^*}A_{{[\Gamma]}})^\T
D_{S^*}A_{{[\Gamma]}} \vx_0}
 & =  \norm{\hat{\vb}_{[\Gamma]}+ D_SA_{S}(D_{S^*}A_{{[\Gamma]}})^\T \hat{\vb}_{[\Gamma]}}\\
& \ge  \norm{\hat{\vb}_{[\Gamma]}-D_{S^*}A_{{[\Gamma]}} (D_{S^*}A_{{[\Gamma]}})^\T \hat{\vb}_{[\Gamma]}}.
\end{aligned}
\end{equation}
From strong complement property (\ref{eq:strcompp}), we know
\begin{equation*}
\norms{D_{S^*}A_{{[\Gamma]}} (D_{S^*}A_{{[\Gamma]}})^\T }=\norms{(D_{S^*}A_{{[\Gamma]}})^\T
D_{S^*}A_{{[\Gamma]}} }=\norms{A_{ [\Gamma]}^\T A_{ [\Gamma] }   }
=\norms{I-A_{[\Gamma^c]}^\T A_{[\Gamma^c]}} \le 1-\beta^2,
\end{equation*}
which implies
\begin{equation} \label{eq:lowbound}
\norm{\hat{\vb}_{[\Gamma]}-D_{S^*}A_{{[\Gamma]}} (D_{S^*}A_{{[\Gamma]}})^\T \hat{\vb}_{[\Gamma]}}
 \ge \beta^2 \norm{\hat{\vb}_{[\Gamma]}}.
\end{equation}
Putting (\ref{eq:medlow}) and (\ref{eq:lowbound}) into (\ref{eq:hs}), we have
\[
d(\KK_S,\vb)\ge 2\beta^2 \norm{\hat{\vb}_{[\Gamma]}} -\norm{(I-D_SA(D_SA)^\T)\eta}
 \ge 2\beta^2 \lambda-\norm{\eta}.
\]
Combining the above estimator with (\ref{eq:hsstar}) and noting $\norm{\eta} < \beta^2 \lambda$, we
obtain
\[
d(\KK_S,\vb) > \norm{\eta} \ge d(\KK_{S^*},\vb)
\]
for any $S \neq S^*$. Thus we complete the proof for the case where
$A^\T A =I$.

Finally, for general matrix $A\in \R^{m\times d}$,  we assume the singular
decomposition of  $A$ is $A=UDV^\T$ where $U:=[\vu_1,\ldots,\vu_m]^\T \in \R^{m\times
d}$ and $D \in \R^{d\times d}$ is an invertible  diagonal matrix. Let $\tA=U$, $\tx=D
V^\T \vx$ and $\tx_0=D V^\T \vx_0$. Then we have
\[
\norm{\abs{A\vx}-\vb}=\norm{\abs{\tA\tx}-\vb} \quad \mbox{and} \quad \vb=\abs{A\vx_0}+\eta=
\abs{\tA\tx_0}+\eta.
\]
Note that the program (\ref{eq:least}) has a unique solution  iff
$\mbox{argmin}_{\tx}\norm{\abs{\tA\tx}-\vb}$ has a unique solution. Hence,  if $\tA$
has $\beta$-strong complement property then we arrive at the conclusion. Indeed, for
any $\Gamma \subset \dkh{1,\ldots, m}$
\[
\tA_{[\Gamma]}^\T \tA_{[\Gamma]}=\sum_{i\in \Gamma} \vu_i\vu_i^\T \succeq \frac{1}{\sigma_{\max}^2(A)}  \cdot \sum_{i\in \Gamma} \sigma_i^2\ \vu_i\vu_i^\T= \frac{1}{\sigma_{\max}^2(A)}  \cdot A_{[\Gamma]}A_{[\Gamma]}^\T,
\]
where $\sigma_i$ are the singular values of $A$. Since $A\in \R^{m\times d}$ has
$\beta\sigma_{\max}(A)$-strong complement property, it immediately gives that
\[
\max\dkh{\lambda_{\min} (\tA_{[\Gamma]}^\T \tA_{[\Gamma]}), \lambda_{\min} (\tA_{[\Gamma^c]}^\T \tA_{[\Gamma^c]})} \ge \beta^2.
\]
Hence, $\tA$ has $\beta$-strong complement property. This completes the proof.
\end{proof}

Theorem \ref{th:clouniq} requires the matrix  $A$ has strong complement property.
The next lemma shows that the Gaussian random matrix satisfies such property with
high probability.
\begin{lemma}\cite[Theorem 20]{bandeira2014saving} \label{le:gauscp}
Assume that $A\in \R^{m\times d}$ is a Gaussian random matrix with  independent
standard normal entries. If $m>2d$ then for every $\epsilon>0$ the matrix $A$ has
$\sigma$-strong complement property with probability at least $1-\exp(-\epsilon m)$
where $R=m/d$ and
\[
\sigma=\frac{1}{\sqrt{2}e^{1+\epsilon/(R-2)}}\cdot \frac{m-2d+2}{2^{R/(R-2)}\sqrt{m}}.
\]
\end{lemma}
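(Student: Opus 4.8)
The plan is to read the strong complement property as a statement about the least singular values of the row-submatrices of $A$ and to bound its failure probability by a union bound over the $2^m$ index sets $\Gamma\subseteq\dkh{1,\ldots,m}$. Write $G_k$ for a generic $k\times d$ matrix with i.i.d.\ standard normal entries and $s_{\min}(\cdot)$ for the least singular value, so that $\lambda_{\min}(A_{[\Gamma]}^\T A_{[\Gamma]})=s_{\min}(A_{[\Gamma]})^2$. By definition, the $\sigma$-strong complement property fails precisely when there is some $\Gamma$ for which $s_{\min}(A_{[\Gamma]})<\sigma$ \emph{and} $s_{\min}(A_{[\Gamma^c]})<\sigma$ simultaneously. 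The structural observation that drives everything is that $A_{[\Gamma]}$ and $A_{[\Gamma^c]}$ are built from disjoint rows of $A$ and are therefore independent, so the probability of this joint event factorizes.

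First I would record a small-ball estimate for the least singular value of a tall Gaussian matrix, of the shape
\[
\Prob\xkh{s_{\min}(G_k)\le \sigma}\,\le\,\xkh{\sigma/\beta_k}^{\,k-d+1},\qquad k\ge d,
\]
with an explicit scale $\beta_k$ of order $\sqrt{k}$ (coming, e.g., from the known density of the smallest eigenvalue of a real Wishart matrix, or from Davidson--Szarek/Rudelson--Vershynin type bounds near the soft edge $\sqrt{k}-\sqrt{d-1}$). For a balanced index set, $d\le\abs{\Gamma}=k\le m-d$, multiplying the two independent factors gives
\[
\Prob\xkh{s_{\min}(A_{[\Gamma]})<\sigma,\ s_{\min}(A_{[\Gamma^c]})<\sigma}\,\le\,\xkh{\sigma/\beta_k}^{k-d+1}\xkh{\sigma/\beta_{m-k}}^{m-k-d+1},
\]
whose total exponent collapses to $(k-d+1)+(m-k-d+1)=m-2d+2$, independent of $k$. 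This is exactly the mechanism that produces the factor $m-2d+2$ in the numerator of $\sigma$, and it is what makes the $2^m$ many subsets affordable.

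Next I would dispose of the unbalanced sets. If $\abs{\Gamma}<d$ then $A_{[\Gamma]}$ has fewer than $d$ rows, so $\lambda_{\min}(A_{[\Gamma]}^\T A_{[\Gamma]})=0$ automatically; a failure at such $\Gamma$ then forces $s_{\min}(A_{[\Gamma^c]})<\sigma$ for a complement with $\abs{\Gamma^c}\ge m-d+1\gg d$ rows, whose least singular value is of order $\sqrt{m-d}-\sqrt d$ by Gaussian concentration and hence dwarfs the tiny $\sigma$. Thus each such event has probability $e^{-\Omega(m)}$, and there are only polynomially many of them; the symmetric case $\abs{\Gamma}>m-d$ is identical. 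Hence the balanced sets dominate.

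Finally I would assemble the union bound. Summing the balanced estimate over all $\Gamma$ gives a bound of the form $2^m\,(\sigma/\beta)^{m-2d+2}$ (after controlling the mild $k$-dependence of $\beta_k\beta_{m-k}$); requiring this to be at most $e^{-\epsilon m}$ and solving for $\sigma$ yields $\sigma\lesssim\beta\cdot 2^{-m/(m-2d+2)}$. Since $m/(m-2d+2)\approx R/(R-2)$, the union-bound factor $2^m$ is cancelled almost exactly, leaving the stated $2^{R/(R-2)}$ in the denominator; the explicit constants of the small-ball estimate then supply the $1/\sqrt2$ and $e^{-1}$ factors and the $\sqrt m$ in $\beta$, while the freedom in the exponent $\epsilon$ converts into the slack $e^{-\epsilon/(R-2)}$. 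The main obstacle is the small-ball input itself: one needs a least-singular-value bound with constants sharp enough to remain valid not merely as $\sigma\to0$ but up to a constant fraction of the soft edge (because the extremal ratio $\sigma/\beta$ is a fixed constant in $(0,1)$, not infinitesimal), together with the careful optimization over the subset size $k$ and over $\epsilon$ needed to extract the closed-form value of $\sigma$.
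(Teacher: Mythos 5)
The paper offers no proof of this lemma at all --- it is imported verbatim from \cite{bandeira2014saving} (Theorem 20) --- and your sketch reconstructs the argument given there in all essentials: the union bound over the $2^m$ complementary pairs $(\Gamma,\Gamma^c)$, the independence of the two row-submatrices, small-ball bounds for the least singular value with edge exponents $\abs{\Gamma}-d+1$ and $m-\abs{\Gamma}-d+1$ whose sum produces the $m-2d+2$ in the numerator, and the near-exact cancellation of the factor $2^m$ against $2^{-\frac{R}{R-2}(m-2d+2)}$. The one ingredient you correctly flag as the main obstacle --- a least-singular-value small-ball estimate with explicit constants that remains valid at a fixed constant fraction of the soft edge, not merely as $\sigma\to 0$ --- is precisely what the source supplies via the closed-form (Edelman-type) density of the smallest eigenvalue of a real Wishart matrix together with elementary Gamma-function bounds (whence the explicit factors $\sqrt{2}$ and $e$ in $\sigma$), so your plan is sound and essentially identical to the original proof.
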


Combining Theorem \ref{th:clouniq} and Lemma \ref{le:gauscp}, we have the following
corollary.
\begin{cor}
Assume $m\ge Cd$.  Let $A\in \R^{m\times d}$ be a Gaussian random matrix with
independent standard normal entries. Suppose $\vb=\abs{A\vx_0}+\eta\in {\mathbb R}_+^m$ for some $\vx_0\in
\R^d$ and $\eta \in \R^m$.  Let $\lambda:=\min_i\dkh{\abs{A\vx_0}_i}$. If $\norm{\eta}\le c\lambda$ then with
probability at least $1-\exp(-c_0 m)$ the program (\ref{eq:least}) has a
unique solution, where $C, c$ and $c_0$ are universal constants.
\end{cor}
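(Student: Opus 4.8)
The plan is to obtain the corollary as a direct instantiation of Theorem \ref{th:clouniq}, in which the free parameter $\beta\in(0,1)$ is played by a universal constant produced by comparing the strong complement parameter supplied by Lemma \ref{le:gauscp} against the largest singular value of $A$. Theorem \ref{th:clouniq} requires that $A$ have $\beta\sigma_{\max}(A)$-strong complement property and that $\norm{\eta}\le\beta^2\lambda$. So it suffices to exhibit a universal $\beta_0\in(0,1)$ such that, with probability at least $1-\exp(-c_0 m)$, the Gaussian matrix $A$ has $\beta_0\sigma_{\max}(A)$-strong complement property; then setting $c:=\beta_0^2$ turns the hypothesis $\norm{\eta}\le c\lambda$ into exactly $\norm{\eta}\le\beta_0^2\lambda$, and the conclusion follows. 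I would assemble $\beta_0$ from two one-sided estimates holding with probability exponentially close to $1$: a lower bound $\sigma\ge c_1\sqrt{m}$ on the strong complement parameter, and an upper bound $\sigma_{\max}(A)\le 3\sqrt{m}$.

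For the first estimate I would fix $\epsilon=1$ (any fixed value works) in Lemma \ref{le:gauscp} and simplify the stated value of $\sigma$ under the standing assumption $R=m/d\ge C$. When $C$ is a large enough universal constant, each $R$-dependent factor is controlled away from its extreme: $m-2d+2\ge(1-2/C)\,m$, the exponent $R/(R-2)=1+2/(R-2)$ lies in $(1,2]$ so that $2^{R/(R-2)}\le 4$, and $e^{1+\epsilon/(R-2)}\le e^{2}$. Substituting these into
\[
\sigma=\frac{1}{\sqrt{2}\,e^{1+\epsilon/(R-2)}}\cdot\frac{m-2d+2}{2^{R/(R-2)}\sqrt{m}}
\]
yields $\sigma\ge c_1\sqrt{m}$ for a universal constant $c_1>0$, on the event (of probability at least $1-\exp(-m)$ from the lemma) that $A$ has $\sigma$-strong complement property.

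For the second estimate I would invoke the standard concentration bound for the operator norm of a Gaussian matrix, $\sigma_{\max}(A)\le\sqrt{m}+\sqrt{d}+t$ with probability at least $1-2\exp(-t^2/2)$; taking $t=\sqrt{m}$ and $d\le m$ gives $\sigma_{\max}(A)\le 3\sqrt{m}$ with probability at least $1-2\exp(-m/2)$. A union bound over the two events shows that, with probability at least $1-\exp(-c_0 m)$, both $\sigma\ge c_1\sqrt{m}$ and $\sigma_{\max}(A)\le 3\sqrt{m}$ hold, whence $\sigma\ge(c_1/3)\,\sigma_{\max}(A)$. I would set $\beta_0:=\min\{c_1/3,\,1/2\}$, so that $\beta_0\in(0,1)$; since the $\sigma$-strong complement property with parameter $\sigma$ is inherited by every smaller parameter and $\beta_0\sigma_{\max}(A)\le\sigma$ on this event, $A$ then has $\beta_0\sigma_{\max}(A)$-strong complement property. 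Applying Theorem \ref{th:clouniq} with $\beta=\beta_0$ gives a unique solution whenever $\norm{\eta}\le\beta_0^2\lambda$, i.e. with $c=\beta_0^2$.

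The only genuinely delicate point is the first estimate: one must verify that the ostensibly $R$-dependent constant appearing in Lemma \ref{le:gauscp} is in fact bounded below by a universal multiple of $\sqrt{m}$ once $R=m/d$ exceeds a fixed threshold $C$. This is precisely what forces the hypothesis $m\ge Cd$ and what guarantees that $\beta_0$, and hence $c$ and $c_0$, are universal rather than dimension-dependent. By contrast, the singular-value bound and the final union bound are routine.
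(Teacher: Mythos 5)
Your proposal is correct and takes essentially the same route as the paper's own proof: both fix $\epsilon=1$ in Lemma \ref{le:gauscp} to deduce $\sigma\ge c_1\sqrt{m}$ once $m\ge Cd$, bound $\sigma_{\max}(A)\le 3\sqrt{m}$ via standard Gaussian concentration, and conclude that $A$ has the $\beta\sigma_{\max}(A)$-strong complement property for a universal $\beta\in(0,1)$, so that Theorem \ref{th:clouniq} applies with $c=\beta^2$. Your added remarks (monotonicity of the strong complement property in its parameter, the explicit union bound, and the explicit $R$-dependent estimates) merely spell out steps the paper leaves implicit.
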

\begin{proof}
Picking $\epsilon=1$ in Lemma \ref{le:gauscp}, we obtain for $m>2d$, with probability at least $1-\exp(- m)$,
the matrix $A$ has $\sigma$-strong complement property with
\begin{equation} \label{eq:strocom}
\sigma=\frac{1}{\sqrt{2}e^{1+1/(R-2)}}\cdot \frac{m-2d+2}{2^{R/(R-2)}\sqrt{m}},
\end{equation}
where $R=m/d$. A simple observation is that  if $m\ge 4d$ then
\begin{equation*}
 \frac{1}{8\sqrt{2}e^{3/2}} \cdot \sqrt{m}\le \sigma \le  \frac{1}{2\sqrt{2}e } \cdot \sqrt{m}.
\end{equation*}

On the other hand,  since $A$ is a Gaussian random matrix, with probability at least $1-\exp(- cm)$, we have
\begin{equation} \label{eq:comsma}
\sqrt{m} \le \sigma_{\max}(A) \le 3 \sqrt{m}
\end{equation}
 provided $m\ge Cd$, where $C$ and $c$ are universal constants.

Combining \eqref{eq:strocom} with \eqref{eq:comsma}, we obtain that for $m\ge Cd$,
with probability at least $1-\exp(- c_0 m)$,  the matrix $A$ has
$\beta\sigma_{\max}(A)$-strong complement property for some constant $0<\beta<1$,
where $c_0$ is a universal constant. Then Theorem \ref{th:clouniq} implies the
conclusion holds.
\end{proof}

\section{Stability of solutions to (\ref{eq:least})}
In this section, we focus on Question III, i.e.,  the stability of solutions to
(\ref{eq:least}).
 For convenience, we set
\[
U_A:=\{\vb\in \R^m: \# P_{\KK_A}(\vb) =1\},
\]
where $P_{\KK_A}(\vb)$ is the set of the best approximation to  $\vb$ from $\KK_A$ and   $\# P_{\KK_A}(\vb)$ is the cardinality of $P_{\KK_A}(\vb)$. Recall  that  the program  (\ref{eq:least}) is
\begin{equation*}
\Phi_A(\vb)\,\,:=\,\,\argmin{\vx\in \HH^d}\|\abs{A\vx}-\vb\|^2.
\end{equation*}
The following lemma states $\Phi_A(\vb)$  is bilipschitz if $\vb \in \KK_A$.

\begin{lemma}\label{th:stable}\cite{phase}
Assume that $A\in \HH^{m\times d}$ has phase retrieval property. There exist
constants  $\alpha>0, \beta>0$ which only depend on $A$ so that
\begin{equation}\label{eq:stable}
\alpha\cdot \dist(\Phi_A(\vb),\Phi_A(\vb'))\,\,\leq \,\,
 \|\vb-\vb'\|\,\, \leq\,\, \beta \cdot \dist(\Phi_A(\vb),\Phi_A(\vb'))
\end{equation}
holds for all $\vb,\vb'\in \KK_A$.
\end{lemma}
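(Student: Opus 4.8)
The plan is to first reduce \eqref{eq:stable} to a bilipschitz estimate for the phaseless map and then to establish the two inequalities separately. Since $A$ has phase retrieval property and $\vb,\vb'\in\KK_A$, write $\vb=\abs{A\vx}$ and $\vb'=\abs{A\vx'}$; by the phase retrieval property together with Lemma \ref{le:one} the solution sets $\Phi_A(\vb)$ and $\Phi_A(\vb')$ are the singletons $\{\underline{\vx}\}$ and $\{\underline{\vx'}\}$ in $\underline{\HH^d}$, so that $\dist(\Phi_A(\vb),\Phi_A(\vb'))=\|\underline{\vx}-\underline{\vx'}\|$ and $\|\vb-\vb'\|=\|\abs{A\vx}-\abs{A\vx'}\|$. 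Thus \eqref{eq:stable} is equivalent to $\alpha\,\|\underline{\vx}-\underline{\vx'}\|\le\|\abs{A\vx}-\abs{A\vx'}\|\le\beta\,\|\underline{\vx}-\underline{\vx'}\|$ for all $\underline{\vx},\underline{\vx'}$. The upper bound is immediate: for any unimodular $c$ the reverse triangle inequality gives, entrywise, $\big|\,\abs{\nj{\va_i,\vx}}-\abs{\nj{\va_i,\vx'}}\,\big|=\big|\,\abs{\nj{\va_i,\vx}}-\abs{\nj{\va_i,c\vx'}}\,\big|\le\abs{\nj{\va_i,\vx-c\vx'}}$, whence $\|\abs{A\vx}-\abs{A\vx'}\|\le\|A(\vx-c\vx')\|\le\sigma_{\max}(A)\|\vx-c\vx'\|$; minimizing over $\abs{c}=1$ yields the upper bound with $\beta=\sigma_{\max}(A)$, valid in both cases.

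The lower bound is the crux, and the real and complex cases are handled differently. In the real case the map is piecewise linear, which makes the argument direct. Given $\vx,\vx'$, let $S:=\{i:\nj{\va_i,\vx}\nj{\va_i,\vx'}\ge0\}$ collect the indices on which the two inner products share a sign. A short computation gives $\abs{A\vx}_i-\abs{A\vx'}_i=\pm\nj{\va_i,\vx-\vx'}$ for $i\in S$ and $\pm\nj{\va_i,\vx+\vx'}$ for $i\in S^c$, so
\[
\|\abs{A\vx}-\abs{A\vx'}\|^2=\|A_S(\vx-\vx')\|^2+\|A_{S^c}(\vx+\vx')\|^2 ,
\]
where $A_S$ is the submatrix with rows indexed by $S$. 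Since $\|\underline{\vx}-\underline{\vx'}\|=\min\{\|\vx-\vx'\|,\|\vx+\vx'\|\}$, assume without loss of generality $\|\vx-\vx'\|\le\|\vx+\vx'\|$. By Lemma \ref{le:comp} the complement property holds, so for the partition $(S,S^c)$ at least one of $\spann\{\va_j:j\in S\}$ or $\spann\{\va_j:j\in S^c\}$ equals $\R^d$; in the former case the first term dominates and in the latter the second does, and either way one obtains $\|\abs{A\vx}-\abs{A\vx'}\|^2\ge\alpha^2\|\vx-\vx'\|^2$ with $\alpha^2:=\min_{\Gamma}\max\{\lambda_{\min}(A_{[\Gamma]}^\T A_{[\Gamma]}),\lambda_{\min}(A_{[\Gamma^c]}^\T A_{[\Gamma^c]})\}$. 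This constant is strictly positive because the complement property forces a full-column-rank block in each of the finitely many partitions, which is exactly the strong complement property; this proves the real case.

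In the complex case the map is genuinely nonlinear and the sign decomposition is unavailable, so I would instead argue by compactness. Set $\alpha:=\inf\|\abs{A\vx}-\abs{A\vx'}\|/\|\underline{\vx}-\underline{\vx'}\|$ over all $\underline{\vx}\ne\underline{\vx'}$; both numerator and denominator are continuous and jointly positively homogeneous of degree one in $(\vx,\vx')$, so the infimum may be taken over the compact sphere $\|\vx\|^2+\|\vx'\|^2=1$. If $\alpha=0$, a minimizing sequence has a subsequence converging to some $(\vx^*,\vx'^*)$ on this sphere; should $\underline{\vx^*}\ne\underline{\vx'^*}$, continuity would force $\abs{A\vx^*}=\abs{A\vx'^*}$, contradicting the injectivity guaranteed by phase retrieval. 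Hence the only way $\alpha$ can vanish is through a minimizing sequence collapsing onto the diagonal $\underline{\vx^*}=\underline{\vx'^*}$, and this is the main obstacle: the ratio is of the indeterminate form $0/0$ there. To rule it out I would prove a \emph{local} lower-Lipschitz estimate near each nonzero base point, either by a blow-up / tangent-cone analysis of the phaseless map or by lifting to the linear measurements $\nj{\va_i\va_i^*,\vx\vx^*}$ and invoking stability of the induced injective map on Hermitian matrices of rank at most two; the delicate point is to bound this local constant away from zero uniformly as the base point ranges over the sphere, which is exactly where the phase retrieval property of $A$ must be used quantitatively.
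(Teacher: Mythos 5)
Your real-case argument is complete and correct: the sign-splitting identity $\norm{\abs{A\vx}-\abs{A\vx'}}^2=\norm{A_S(\vx-\vx')}^2+\norm{A_{S^c}(\vx+\vx')}^2$, the reduction to $\norm{\vx-\vx'}\le\norm{\vx+\vx'}$, and the invocation of the complement property (Lemma \ref{le:comp}) with $\alpha^2=\min_{\Gamma}\max\{\lambda_{\min}(A_{[\Gamma]}^\T A_{[\Gamma]}),\lambda_{\min}(A_{[\Gamma^c]}^\T A_{[\Gamma^c]})\}$ is exactly the strong-complement-property mechanism of \cite{bandeira2014saving}, and the upper bound with $\beta=\sigma_{\max}(A)$ is fine. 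For comparison, note that the paper itself does not prove this lemma at all: it is imported from \cite{phase}, so there is no internal proof to match, and your real-case derivation is a legitimate self-contained replacement for that half of the citation.

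The genuine gap is the complex case, and you have located it yourself but not closed it. Your compactness argument establishes only that the ratio $\norm{\abs{A\vx}-\abs{A\vx'}}/\norm{\underline{\vx}-\underline{\vx'}}$ cannot vanish along sequences staying away from the diagonal; on the diagonal $\underline{\vx^*}=\underline{\vx'^*}$ the ratio is $0/0$ and injectivity gives nothing, so as written the lower bound is not proved for $\HH=\C$. Saying one ``would prove a local lower-Lipschitz estimate'' by blow-up or by lifting names a strategy without executing the step that carries all the difficulty. The standard way to kill the diagonal degeneracy (this is how \cite{Jameson} and \cite{phase} proceed) is to observe that the lifted measurement map $H\mapsto(\nj{\va_i\va_i^*,H})_i$ is \emph{linear} and injective on the set $\mathcal{D}=\{\vx\vx^*-\vy\vy^*:\vx,\vy\in\C^d\}$ of Hermitian rank-at-most-two differences, and that $\mathcal{D}$ is a closed cone: homogeneity lets one take the infimum over the compact set $\mathcal{D}\cap\{\norm{H}_F=1\}$, where injectivity of a linear map directly yields a uniform constant with no indeterminacy, because the quotient structure has been absorbed into the lift. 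Even then two nontrivial conversions remain which your sketch omits: the norm equivalence between $\norm{\vx\vx^*-\vy\vy^*}_F$ and the quotient distance $\norm{\underline{\vx}-\underline{\vy}}\cdot\max(\norm{\vx},\norm{\vy})$, and the passage from the squared moduli $\abs{A\vx}^2$ (which the lift controls) back to $\abs{A\vx}$, where dividing by $\abs{A\vx}+\abs{A\vy}$ entrywise degenerates on small entries and needs a separate argument. Since the lemma is stated for $\HH\in\{\R,\C\}$ and the paper's Section 5 instability results rely on it in both settings, the complex half of your proposal stands as an unproven claim rather than a proof.
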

Recall that $\vb \in \KK_A$ means $\vb=\abs{A\vx_0} $ for some $\vx_0$.
Hence, Lemma \ref{th:stable} shows that the phase retrieval problem in the absence of noise is stable.

\subsection{Instability of solutions to (\ref{eq:least})}
Lemma \ref{th:stable} only consider the case where $\vb \in \KK_A$. In this
subsection, we focus on  the general case where $\vb \notin \KK_A$ showing the
program (\ref{eq:least}) is unstable without the assumption of  $\vb \in \KK_A$. To
begin with,  we need the following lemma.
\begin{lemma}\label{th:nonsta}
Assume $A\in \HH^{m\times d}$ has phase retrieval property. For arbitrary constant
$\epsilon>0$ there exist $\vb_1,\vb_2\in U_A\subset \R^m$ so that
\[
{\|\vb_1-\vb_2\|}<\epsilon \cdot {\|P_{\KK_A}(\vb_1)-P_{\KK_A}(\vb_2)\|}.
\]
\end{lemma}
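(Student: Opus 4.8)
The plan is to construct explicit vectors $\vb_1, \vb_2$ whose best approximations onto $\KK_A$ land on two \emph{different} convex pieces of the phaseless surface, meeting right at the non-convex ``seam'' between those pieces, so that an arbitrarily small perturbation of the data produces a jump in the projection. Concretely, recall from the proof of Theorem~\ref{th:main} that $\KK_A = \bigcup_{S} \KK_S$ where each $\KK_S := \{\vy \ge 0 : \vy = D_S A\vx\}$ is convex (indeed the intersection of a subspace with the nonnegative orthant). Since $A$ is phase retrievable, Lemma~\ref{th:nonc} guarantees $\KK_A$ is non-convex, so there must exist two vectors $\vy_1 \in \KK_{S_1}$ and $\vy_2 \in \KK_{S_2}$ with $S_1 \ne S_2$ whose midpoint $\tfrac12(\vy_1+\vy_2)$ fails to lie in $\KK_A$. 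The idea is to base the two data vectors on this pair.

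First I would locate a point $\vp$ on the ``ridge'' of $\KK_A$: a point equidistant from two distinct nearest points $\vq_1 \in \KK_{S_1}$ and $\vq_2 \in \KK_{S_2}$, i.e.\ a $\vp$ with $\#P_{\KK_A}(\vp) \ge 2$. Such a $\vp$ exists precisely because $\KK_A$ is not a Chebyshev set (Lemma~\ref{th:best} together with Lemma~\ref{th:nonc}). Then I would move along the segments from $\vp$ toward each of the two projections: set
\[
\vb_1 := \vp + t(\vq_1 - \vp), \qquad \vb_2 := \vp + t(\vq_2 - \vp)
\]
for a small parameter $t \in (0,1)$. For $\vb_1$, since it lies strictly inside the ``basin'' of the piece $\KK_{S_1}$, its nearest point in $\KK_A$ should be exactly $\vq_1$, and similarly $P_{\KK_A}(\vb_2) = \vq_2$, so that $\vb_1, \vb_2 \in U_A$ and $\|P_{\KK_A}(\vb_1) - P_{\KK_A}(\vb_2)\| = \|\vq_1 - \vq_2\|$ stays bounded away from zero. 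Meanwhile $\|\vb_1 - \vb_2\| = t\|\vq_1 - \vq_2\| \to 0$ as $t \to 0$. Choosing $t < \epsilon$ then yields the claimed inequality ${\|\vb_1 - \vb_2\|} < \epsilon\,{\|P_{\KK_A}(\vb_1) - P_{\KK_A}(\vb_2)\|}$.

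The main obstacle, and the step needing real care, is verifying that the projections of $\vb_1$ and $\vb_2$ are genuinely \emph{unique} (so that $\vb_1, \vb_2 \in U_A$) and are exactly $\vq_1, \vq_2$ rather than some third point on a different piece. Because $\vp$ sits on the seam, one must argue that stepping a little \emph{into} one basin strictly decreases the distance to that basin's nearest point while the distance to every competing piece strictly increases or at least remains larger; this requires a local argument using the strict convexity of Euclidean distance to a closed convex set (each $\KK_S$ is closed and convex, so $P_{\KK_S}$ is single-valued and nonexpansive by Lemma~\ref{le:leq1}). A clean way to package this is: for $\vb_1 = \vp + t(\vq_1 - \vp)$ we have $d(\KK_{S_1}, \vb_1) \le (1-t)\,d(\KK_{S_1}, \vp)$, whereas for any other relevant piece $\KK_S$ the nonexpansiveness of the distance function (Lemma~\ref{le:dist}) gives $d(\KK_S, \vb_1) \ge d(\KK_S, \vp) - t\|\vq_1 - \vp\| = d(\KK_{S_1},\vp) - t\,d(\KK_{S_1},\vp)\cdot(\text{const})$, and one checks the strict gap survives for small $t$. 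Finally, since $\KK_A$ is a finite union of the convex pieces $\KK_S$ (at most $2^m$ of them), only finitely many competitors must be controlled, which makes the ``strictly bounded away'' estimate uniform and lets a single small $t$ work. I would also note that the uniqueness of the projection for $\vb_1$ can be obtained directly: if two pieces tied at $\vb_1$, the same tie would persist at $\vp$ by reversing the perturbation, but moving toward $\vq_1$ broke the symmetry, giving the required strict inequality and hence $\#P_{\KK_A}(\vb_1) = 1$.
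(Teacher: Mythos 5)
Your construction is the same as the paper's: pick $\vb_0$ (your $\vp$) with two distinct best approximations $\vy_1,\vy_2\in P_{\KK_A}(\vb_0)$ and set $\vb_i:=t\,\vy_i+(1-t)\vb_0$, which is exactly the paper's choice with $t=\epsilon/2$; the final inequality is then immediate. The genuine gap is in your verification of the pivotal claim $P_{\KK_A}(\vb_1)=\dkh{\vy_1}$. Write $d:=d(\KK_A,\vb_0)$. Any piece containing $\vy_2$ is \emph{tied} with $\KK_{S_1}$ at $\vb_0$, i.e.\ its distance to $\vb_0$ is also $d$, and such tied competitors always exist here. For a tied piece $\KK_S$ your two estimates give $d(\KK_{S_1},\vb_1)\le(1-t)d$ and $d(\KK_S,\vb_1)\ge d(\KK_S,\vb_0)-td=(1-t)d$: the competitor's lower bound equals the favored piece's upper bound for \emph{every} $t>0$, so no strict gap can ``survive for small $t$''; nonexpansiveness of the distance function is simply too weak to break ties. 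Your fallback remark does not close this either: if $\vy_0\in P_{\KK_A}(\vb_1)$ with $\vy_0\neq\vy_1$, then ``reversing the perturbation'' via the triangle inequality yields only $\|\vb_0-\vy_0\|\le\|\vb_0-\vb_1\|+\|\vb_1-\vy_0\|\le td+(1-t)d=d$, i.e.\ $\vy_0\in P_{\KK_A}(\vb_0)$ --- which is no contradiction, since $\vb_0$ was chosen precisely to have several best approximations (nothing rules out $\vy_0=\vy_2$). A further scope problem: the decomposition $\KK_A=\bigcup_S\KK_S$ into convex pieces exists only for $\HH=\R$, while the lemma is stated (and the paper's proof works) for $\HH=\C$ as well.

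The missing idea --- and the paper's actual argument --- is the equality-case analysis in that very chain of inequalities. If $\|\vb_1-\vy_0\|\le\|\vb_1-\vy_1\|=(1-t)d$ for some $\vy_0\in\KK_A$, the chain above holds with equality throughout, which forces $\|\vb_0-\vy_0\|=d$ \emph{and} collinearity of $\vb_0,\vb_1,\vy_0$ with $\vb_1$ between the other two; hence $\vy_0$ is the point at distance $d$ from $\vb_0$ on the ray through $\vy_1$, so $\vy_0=\vy_1$ (equivalently, as the paper phrases it: unless $\vy_0=\vy_1$ one gets $\|\vb_0-\vy_0\|<\|\vb_0-\vy_1\|$, contradicting that $\vy_1$ is a best approximation to $\vb_0$). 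This is the classical fact that any point strictly between $\vb_0$ and one of its best approximations in an \emph{arbitrary} closed set has that approximation as its unique nearest point; it needs neither the convex decomposition nor real scalars. Replacing your piecewise estimate with this argument, and taking $t<\min\dkh{\epsilon,1}$, your write-up becomes correct and coincides with the paper's proof.
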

\begin{proof}
According to Theorem \ref{th:main}, there exists $\vb_0\in \R^m$ so that
$\#P_{\KK_A}(\vb_0)\geq 2$.  Assume $\vy_1,\vy_2\in P_{\KK_A}(\vb_0)$ with $\vy_1\neq
\vy_2$. We claim $P_{\KK_A}(\lambda \vy_1+(1-\lambda)\vb_0)=\{\vy_1\}$ and
$P_{\KK_A}(\lambda \vy_2+(1-\lambda)\vb_0)=\{\vy_2\}$ for any $\lambda\in (0,1)$.
Indeed, if $P_{\KK_A}(\lambda_0 \vy_1+(1-\lambda_0)\vb_0)$ contains some vector $\vy_0\in \KK_A$
with $\vy_0\neq \vy_1$ for some $\lambda_0\in (0,1)$ then
\begin{eqnarray*}
\|\vb_0-\vy_0\|&<&\|\vb_0-(\lambda_0 \vy_1+(1-\lambda_0)\vb_0)\|+\|(\lambda_0 \vy_1+(1-\lambda_0)\vb_0)-\vy_0\|\\
&\leq & \lambda_0 \|\vb_0-\vy_1\|+\|(\lambda_0 \vy_1+(1-\lambda_0)\vb_0)-\vy_1\|\\
&= & \lambda_0 \|\vb_0-\vy_1\|+ (1-\lambda_0) \|\vb_0-\vy_1\\\
&=& \|\vb_0-\vy_1\|,
 \end{eqnarray*}
 which contradicts to the fact that $\vy_1$ is a best approximation to $\vb_0$ from $\KK_A$.
 Hence, we immediately  obtain $P_{\KK_A}(\lambda \vy_1+(1-\lambda)\vb_0)=\{\vy_1\}$. Similarly, we
 can show $P_{\KK_A}(\lambda \vy_2+(1-\lambda)\vb_0)=\{\vy_2\}$.
  Taking $\vb_1=\frac{\epsilon}{2} \vy_1+(1-\frac{\epsilon}{2})\vb_0\in U_A$ and
$\vb_2=\frac{\epsilon}{2} \vy_2+(1-\frac{\epsilon}{2})\vb_0\in U_A$, we arrive at the conclusion that
\[
\|\vb_1-\vb_2\|=\frac{\epsilon}{2}\|\vy_1-\vy_2\|<\epsilon \|\vy_1-\vy_2\|=
\epsilon \cdot {\|P_{\KK_A}(\vb_1)-P_{\KK_A}(\vb_2)\|}.
\]
\end{proof}

Combining Lemma  \ref{th:stable} and Lemma \ref{th:nonsta}, we have the  following instability result.
\begin{theorem}
Assume that $A\in \HH^{m\times d}$ has phase retrieval property. Then for any
$\epsilon>0$ there exist $\vb_1,\vb_2\in U_A$ so that
\[
\|\vb_1-\vb_2\|\,\,\leq \,\,\epsilon\cdot \dist(\Phi_A(\vb_1),\Phi_A(\vb_2)).
\]
\end{theorem}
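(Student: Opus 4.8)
The plan is to combine the two preceding lemmas directly, since the final theorem is essentially a translation of the purely geometric instability statement in Lemma \ref{th:nonsta} into the language of the solution map $\Phi_A$. The key bridge is Lemma \ref{le:one}, which tells us that when $A$ has phase retrieval property, there is a natural correspondence between solutions of (\ref{eq:least}) and best approximations to $\vb$ from $\KK_A$: recovering $\underline{\vx}$ from $\abs{A\vx}$ is unambiguous, so $\Phi_A(\vb)$ is the preimage under $\vx \mapsto \abs{A\vx}$ of the best-approximation set $P_{\KK_A}(\vb)$.

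First I would invoke Lemma \ref{th:nonsta} to produce, for the given $\epsilon > 0$, two vectors $\vb_1, \vb_2 \in U_A$ with
\[
\norm{\vb_1 - \vb_2} < \epsilon \cdot \norm{P_{\KK_A}(\vb_1) - P_{\KK_A}(\vb_2)}.
\]
Because $\vb_1, \vb_2 \in U_A$, each has a single best approximation, say $\vy_1 = P_{\KK_A}(\vb_1)$ and $\vy_2 = P_{\KK_A}(\vb_2)$ in $\KK_A$. By Lemma \ref{le:one} these correspond to unique solutions $\underline{\vx_1} = \Phi_A(\vb_1)$ and $\underline{\vx_2} = \Phi_A(\vb_2)$ with $\abs{A\vx_1} = \vy_1$ and $\abs{A\vx_2} = \vy_2$. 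Since $\vy_1, \vy_2 \in \KK_A$, I would apply the lower bilipschitz bound from Lemma \ref{th:stable}, namely $\alpha \cdot \dist(\Phi_A(\vy_1'), \Phi_A(\vy_2')) \le \norm{\vy_1 - \vy_2}$ evaluated at the phaseless data $\vy_1, \vy_2$, to convert the geometric separation $\norm{\vy_1 - \vy_2}$ into a separation of the solution sets. Concretely, $\norm{\vy_1 - \vy_2} \ge \alpha \cdot \dist(\Phi_A(\vb_1), \Phi_A(\vb_2))$, so that chaining the two inequalities yields $\norm{\vb_1 - \vb_2} < \epsilon \alpha^{-1} \cdot \dist(\Phi_A(\vb_1), \Phi_A(\vb_2))$, and absorbing the constant $\alpha$ into $\epsilon$ (which is legitimate since $\epsilon$ is arbitrary) gives the stated bound.

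The main obstacle, and the point requiring the most care, is the correct application of Lemma \ref{th:stable}: that lemma is stated for data lying in $\KK_A$, whereas our noisy vectors $\vb_1, \vb_2$ do not lie in $\KK_A$. The resolution is that I do not apply the stability estimate to $\vb_1, \vb_2$ themselves but to their best approximations $\vy_1, \vy_2 \in \KK_A$, which do satisfy the hypothesis; the crucial observation is that $\Phi_A(\vb_i)$ and $\Phi_A(\vy_i)$ coincide, since the unique minimizer of $\norm{\abs{A\vx} - \vb_i}$ is exactly the $\vx$ with $\abs{A\vx} = \vy_i$, and that same $\vx$ is the unique minimizer of $\norm{\abs{A\vx} - \vy_i}$. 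Establishing this identification cleanly is where one must be precise about the equivalence classes in $\underline{\HH^d}$. Once that is settled, the constant $\alpha$ depends only on $A$, so rescaling $\epsilon$ completes the argument and shows the solution map is never uniformly stable.
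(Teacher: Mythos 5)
Your overall route is exactly the paper's: produce $\vb_1,\vb_2\in U_A$ via Lemma \ref{th:nonsta}, identify $\Phi_A(\vb_i)=\Phi_A(P_{\KK_A}(\vb_i))$ for the unique projections $\vy_i:=P_{\KK_A}(\vb_i)\in\KK_A$, transfer the separation of $\vy_1,\vy_2$ to a separation of the solution sets via Lemma \ref{th:stable}, and absorb the resulting constant into $\epsilon$. The identification step, which you correctly flag as the delicate point, is handled properly and is precisely what the paper does.

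However, there is a genuine error in the transfer step: you invoke the wrong side of the bilipschitz estimate (\ref{eq:stable}). You use the lower bound $\alpha\cdot \dist(\Phi_A(\vy_1),\Phi_A(\vy_2))\leq \norm{\vy_1-\vy_2}$, which only yields the upper bound $\dist(\Phi_A(\vb_1),\Phi_A(\vb_2))\leq \alpha^{-1}\norm{\vy_1-\vy_2}$; combining this with $\norm{\vb_1-\vb_2}<\epsilon\,\norm{\vy_1-\vy_2}$ cannot produce the claimed conclusion $\norm{\vb_1-\vb_2}<\epsilon\alpha^{-1}\dist(\Phi_A(\vb_1),\Phi_A(\vb_2))$ --- the chain is a non sequitur, since both inequalities bound quantities from the same side, and what you need is for $\norm{\vy_1-\vy_2}$ to be \emph{dominated by} $\dist(\Phi_A(\vb_1),\Phi_A(\vb_2))$, not to dominate it. The theorem requires showing that $\dist$ is large relative to $\norm{\vb_1-\vb_2}$, so the correct tool is the right-hand inequality of (\ref{eq:stable}), namely $\norm{\vy_1-\vy_2}\leq \beta\cdot \dist(\Phi_A(\vy_1),\Phi_A(\vy_2))$, which is the easy forward-Lipschitz direction. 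With that substitution, apply Lemma \ref{th:nonsta} with $\epsilon/\beta$ in place of $\epsilon$, and the chain closes exactly as in the paper: $\norm{\vb_1-\vb_2}<(\epsilon/\beta)\norm{\vy_1-\vy_2}\leq \epsilon\cdot \dist(\Phi_A(\vb_1),\Phi_A(\vb_2))$. The fix is one line, but as written your derivation does not prove the theorem.
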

\begin{proof}
According to Lemma \ref{th:nonsta}, there exist $\vb_1,\vb_2\in U_A$ so that
\begin{equation}\label{eq:uns}
{\|\vb_1-\vb_2\|}<\frac{\epsilon}{\beta} \cdot {\|P_{\KK_A}(\vb_1)-P_{\KK_A}(\vb_2)\|},
\end{equation}
where $\beta$ is the constant given in Lemma \ref{th:stable}. Note that
$P_{\KK_A}(\vb_j)\in \KK_A$ and $\Phi_A(P_{\KK_A}(\vb_j))=\Phi_A(\vb_j), j=1,2 $.
Combining the right hand sides of (\ref{eq:stable}) and (\ref{eq:uns}), we arrive at the
conclusion.
\end{proof}

The above theorem shows that the program (\ref{eq:least}) is not uniformly stable.
However, we next show that if we restrict the vector $\vb$ to any convex set
$\Omega\subset U_A$ then it is stable. To show that, we introduce a  lemma first.

\begin{lemma}\label{le:cont}
Assume that $\Omega\subset U_A$ is a convex domain. For any matrix $A\in \HH^{m\times
d}$, the $P_{\KK_A}(\cdot)$ is continuous on $\Omega$.
\end{lemma}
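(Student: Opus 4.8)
The plan is to establish continuity sequentially. Since $\Omega\subset U_A$, the best approximation set $P_{\KK_A}(\vb)$ is a singleton for every $\vb\in\Omega$, so $P_{\KK_A}(\cdot)$ is a genuine single-valued map there, and it suffices to show that $\vb_n\to\vb$ with $\vb_n,\vb\in\Omega$ forces $\vy_n:=P_{\KK_A}(\vb_n)\to\vy:=P_{\KK_A}(\vb)$. The one structural fact I would record first is that $\KK_A$ is closed: in the real case this is immediate from the decomposition $\KK_A=\bigcup_{S\subset\{1,\dots,m\}}\dkh{\vy\ge 0:\vy=D_SA\vx}$ as a finite union of intersections of subspaces with the nonnegative orthant, and in general $\KK_A$ is the cone generated by the compact set $\dkh{\abs{A\vu}:\norm{\vu}=1}$, which is bounded away from the origin (as $A$ is injective), hence closed. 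Closedness is what guarantees both that best approximations exist and that subsequential limits of the $\vy_n$ remain in $\KK_A$.

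First I would prove that $\{\vy_n\}$ is bounded. By Lemma~\ref{le:dist} the distance function is nonexpansive, so $\norm{\vy_n-\vb_n}=d(\KK_A,\vb_n)\le d(\KK_A,\vb)+\norm{\vb_n-\vb}$, which is bounded because $\vb_n\to\vb$; combined with the boundedness of $\{\vb_n\}$ this yields $\norm{\vy_n}\le\norm{\vy_n-\vb_n}+\norm{\vb_n}\le C$ for some constant $C$.

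Next I would run a standard subsequence (closed-graph) argument. Let $\vy_{n_k}\to\vy^*$ be any convergent subsequence, which exists by boundedness; since $\KK_A$ is closed, $\vy^*\in\KK_A$. Passing to the limit and invoking Lemma~\ref{le:dist} once more, $\norm{\vy^*-\vb}=\lim_k\norm{\vy_{n_k}-\vb_{n_k}}=\lim_k d(\KK_A,\vb_{n_k})=d(\KK_A,\vb)$, so $\vy^*$ is a best approximation to $\vb$ from $\KK_A$. But $\vb\in\Omega\subset U_A$, so this best approximation is unique, forcing $\vy^*=\vy$. Thus every convergent subsequence of the bounded sequence $\{\vy_n\}$ shares the common limit $\vy$, and therefore $\vy_n\to\vy$, which is the desired continuity.

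The main obstacle is the passage from subsequential convergence to convergence of the whole sequence, and this is exactly where the boundedness (via nonexpansiveness of $d(\KK_A,\cdot)$) and the single-valuedness of $P_{\KK_A}$ at the limit point $\vb\in U_A$ are indispensable: without uniqueness at $\vb$, distinct subsequences could converge to distinct best approximations and continuity would fail. I note that convexity of $\Omega$ is not actually used in this continuity statement—only the inclusion $\Omega\subset U_A$ is needed—though convexity is the hypothesis under which the subsequent stability result is stated.
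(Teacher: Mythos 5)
Your proof is correct and takes essentially the same route as the paper's: boundedness of the projection sequence via the nonexpansiveness of $d(\KK_A,\cdot)$ (Lemma \ref{le:dist}), a convergent-subsequence argument, and uniqueness of the best approximation at the limit point $\vb\in U_A$ to identify all subsequential limits, with the paper merely phrasing this as a proof by contradiction rather than a direct ``every subsequential limit equals $\vy$'' argument. Your explicit verification that $\KK_A$ is closed (a fact the paper uses implicitly when asserting the subsequential limit is a best approximation) and your observation that convexity of $\Omega$ is never used in this lemma are accurate refinements, not a different approach.
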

\begin{proof}
For the aim of  contradiction, we assume that there is a point $\vx\in \Omega$ and a
sequence $\{\vx_j\}_{j=1}^\infty \subset \Omega$ with $\lim_{j\to \infty}\vx_j=\vx$
such that for every $j$ it holds
\begin{equation}\label{eq:con1}
\|P_{\KK_A}(\vx_j)-P_{\KK_A}(\vx)\| \geq \epsilon >0.
\end{equation}
From the definition, we have
\begin{equation}\label{eq:d1}
d(\KK_A,\vx)\leq \|\vx-P_{\KK_A}(\vx_j)\|\leq \|\vx-\vx_j\|+\|\vx_j-P_{\KK_A}(\vx_j)\|=
\|\vx-\vx_j\|+d(\KK_A,\vx_j).
\end{equation}
According to Lemma \ref{le:dist}, we know $d(\KK_A,\vx_j)$ converges to
$d(\KK_A,\vx)$.  By squeeze theorem,  (\ref{eq:d1}) implies
$\|\vx-P_{\KK_A}(\vx_j)\|$ converges to $d(\KK_A,\vx) $. Note that
$\{P_{\KK_A}(\vx_{j})\}_{j=1}^\infty$ is a bounded sequence.  There exists a
subsequence which is convergent.  We assume $P_{\KK_A}(\vx_{j_t})$ converges to
$\vy\in \R^m$. Then
\[
\|\vx-\vy\|=\lim_{t\rightarrow \infty}\|\vx-P_{\KK_A}(\vx_{j_t})\|=d(\KK_A,\vx).
\]
It implies $P_{\KK_A}(\vx)=\{\vy\}$. Hence,
\[
\lim_{t\rightarrow \infty}\|P_{\KK_A}(\vx_{j_t})-P_{\KK_A}(\vx)\|=
\lim_{t\rightarrow \infty}\|P_{\KK_A}(\vx_{j_t})-\vy\|=0,
\]
which contradicts to (\ref{eq:con1}).
\end{proof}

Now, we could extend the stability result in Lemma \ref{th:stable}  from $\vb\in
\KK_A$ to any convex set $\Omega\subset U_A$ in the real case. We also conjecture a
similar result holds for the complex case.
\begin{theorem}
Assume that $A\in \R^{m\times d}$ has phase retrieval property in $\R^d$.
 Let $\Omega\subset U_A$ be a convex domain. Then there exists a constant $\alpha>0$ which only
depends on $A$ so that
\begin{equation}\label{eq:thst}
\alpha \cdot \dist(\Phi_A(\vb),\Phi_A(\vb'))\,\,\leq \,\,
 \|\vb-\vb'\| \quad \text{ for all }\quad \vb,\vb'\in \Omega.
\end{equation}
\end{theorem}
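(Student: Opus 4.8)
The plan is to reduce \eqref{eq:thst} to the bilipschitz estimate of Lemma \ref{th:stable} and then to prove that the metric projection $P_{\KK_A}$ is nonexpansive on the convex set $\Omega$. Fix $\vb,\vb'\in\Omega$ and put $\vy:=P_{\KK_A}(\vb)$ and $\vy':=P_{\KK_A}(\vb')$; these are single points of $\KK_A$ since $\Omega\subset U_A$. Because $P_{\KK_A}(\vy)=\vy$ and $P_{\KK_A}(\vy')=\vy'$, the minimizer sets satisfy $\Phi_A(\vb)=\Phi_A(\vy)$ and $\Phi_A(\vb')=\Phi_A(\vy')$, so the left inequality of Lemma \ref{th:stable} applied to $\vy,\vy'\in\KK_A$ yields
\[
\alpha\cdot\dist(\Phi_A(\vb),\Phi_A(\vb'))=\alpha\cdot\dist(\Phi_A(\vy),\Phi_A(\vy'))\le\norm{\vy-\vy'}=\norm{P_{\KK_A}(\vb)-P_{\KK_A}(\vb')},
\]
with the very same $\alpha$, which depends only on $A$. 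Hence it suffices to show $\norm{P_{\KK_A}(\vb)-P_{\KK_A}(\vb')}\le\norm{\vb-\vb'}$.

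For this nonexpansiveness I would exploit convexity of $\Omega$ through a segment argument. Let $\vb_t:=(1-t)\vb+t\vb'$ for $t\in[0,1]$; by convexity $\vb_t\in\Omega\subset U_A$, so $g(t):=P_{\KK_A}(\vb_t)$ is well defined and, by Lemma \ref{le:cont}, continuous on $[0,1]$. Recall the finite decomposition $\KK_A=\bigcup_S\KK_S$ with $\KK_S:=\{\vy\in\R_+^m:\vy=A_S\vx,\ \vx\in\R^d\}$, where each $\KK_S$ is closed and convex; thus each $P_{\KK_S}$ is nonexpansive by Lemma \ref{le:leq1}. Whenever $g(t)\in\KK_S$, the point $g(t)$ is automatically the nearest point of the smaller set $\KK_S$ to $\vb_t$, so $g(t)=P_{\KK_S}(\vb_t)$. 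Consequently, on the closed set $T_S:=\{t\in[0,1]:g(t)\in\KK_S\}$ the map $g$ coincides with $t\mapsto P_{\KK_S}(\vb_t)$, which is $L$-Lipschitz with $L:=\norm{\vb-\vb'}$, being the composition of the nonexpansive $P_{\KK_S}$ with the affine map $t\mapsto\vb_t$ of speed $L$.

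The sets $T_S$ form a finite closed cover of $[0,1]$, and the last step glues these piecewise bounds into a global Lipschitz estimate for $g$. I would set $c:=\sup\{\tau\in[0,1]:\norm{g(\tau)-g(0)}\le L\tau\}$, which is attained since $g$ is continuous. Assume $c<1$. Choosing $\tau_n\downarrow c$ with $\tau_n\in(c,1]$ and using that only finitely many pieces occur, some $T_{S'}$ contains a subsequence decreasing to $c$, whence $c\in T_{S'}$ by closedness. For such a $\tau>c$ we then have $\norm{g(\tau)-g(c)}=\norm{P_{\KK_{S'}}(\vb_\tau)-P_{\KK_{S'}}(\vb_c)}\le L(\tau-c)$, so the triangle inequality gives $\norm{g(\tau)-g(0)}\le L\tau$, contradicting the definition of $c$. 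Therefore $c=1$ and $\norm{g(1)-g(0)}\le L$, i.e. $\norm{P_{\KK_A}(\vb')-P_{\KK_A}(\vb)}\le\norm{\vb-\vb'}$. Combined with the first paragraph this proves \eqref{eq:thst} with $\alpha$ as in Lemma \ref{th:stable}.

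I expect the gluing step to be the crux. Since $\KK_A$ is non-convex (Lemma \ref{th:nonc}), $P_{\KK_A}$ need not be globally nonexpansive, and the estimate genuinely uses that the whole segment stays in $U_A$, so that $g$ is single valued and continuous; this is precisely what convexity of $\Omega$ together with Lemma \ref{le:cont} supply. Were the active piece allowed to jump discontinuously, the continuation argument would break and the bound could fail.
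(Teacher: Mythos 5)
Your proof is correct and follows the paper's strategy almost step for step: the same reduction via Lemma \ref{th:stable} combined with the identity $\Phi_A(\vb)=\Phi_A(P_{\KK_A}(\vb))$, the same decomposition of $\KK_A$ into finitely many closed convex pieces (your $\KK_S$ are exactly the paper's $H_{\epsilon}$), continuity of $P_{\KK_A}$ on the segment $[\vb,\vb']\subset\Omega\subset U_A$ from Lemma \ref{le:cont}, and nonexpansiveness of each convex projection from Lemma \ref{le:leq1}; your observation that $g(t)\in\KK_S$ forces $g(t)=P_{\KK_S}(\vb_t)$ is also sound, since $\KK_S\subset\KK_A$ gives $d(\KK_S,\vb_t)\ge d(\KK_A,\vb_t)=\norm{g(t)-\vb_t}$ with equality attained at $g(t)$. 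The one genuine difference is the gluing step. The paper simply posits a finite partition $\vb=\vb_0<\vb_1<\cdots<\vb_k=\vb'$ of the segment with $P_{\KK_A}([\vb_{t-1},\vb_t])\subset H_{\epsilon_t}$ and telescopes; the existence of such a subordinate interval partition is not automatic for an arbitrary finite closed cover of $[0,1]$ (the sets $T_S$ could a priori interleave at infinitely many scales near a point), and the paper leaves this assertion unjustified — it can be repaired, for instance, by noting the $T_S$ are semialgebraic and hence finite unions of points and closed intervals. Your supremum argument at $c=\sup\{\tau\in[0,1]:\norm{g(\tau)-g(0)}\le L\tau\}$ sidesteps the issue entirely: it uses only that the $T_S$ are finitely many closed sets covering $[0,1]$ and that $g$ restricted to each $T_S$ is $L$-Lipschitz, and the contradiction for $c<1$ (extract $\tau_n\downarrow c$ in a single $T_{S'}$, conclude $c\in T_{S'}$ by closedness, then $\norm{g(\tau_n)-g(0)}\le L(\tau_n-c)+Lc=L\tau_n$) is carried out correctly. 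So your write-up is, if anything, more rigorous than the paper's at exactly the point you identified as the crux, while the constant $\alpha$ and the conclusion coincide with the paper's.
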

\begin{proof}
 Assume that $\vb,\vb'\in \Omega$. If $\vb=\vb'$, then the conclusion
(\ref{eq:thst}) holds. Thus, we just need to consider the case where $\vb\neq \vb'$.
Note that $P_{\KK_A}(\vb), P_{\KK_A}(\vb')\in \KK_A$. According to Lemma
\ref{th:stable}, there exists a constant $\alpha>0$ which only depends on $A$ so that
\[
 \alpha\cdot \dist(\Phi_A(\vb),\Phi_A(\vb'))\,\,=\,\,\alpha\cdot \dist(\Phi_A(P_{\KK_A}(\vb)),\Phi_A(P_{\KK_A}
 (\vb'))) \,\,\leq \,\,
 \|P_{\KK_A}(\vb)-P_{\KK_A}(\vb')\| .
\]
So, to prove the conclusion, it is enough to show that
\[
 \|P_{\KK_A}(\vb)-P_{\KK_A}(\vb')\| \leq \|\vb-\vb'\|.
\]
 Set
\[
[\vb,\vb']\,\,:=\,\, \{(1-\lambda) \vb+\lambda\vb':\lambda\in [0,1]\}.
\]
Since $\Omega$ is convex, we have $[\vb,\vb']\subset \Omega\subset U_A$. According to
Lemma \ref{le:cont}, $P_{\KK_A}$ is continuous on $[\vb,\vb']$. We use
$D_\bepsilon\in \R^{m\times m}$ to denote a diagonal matrix whose diagonal is
$\bepsilon\in \{1,-1\}^m$. Set
\[
H_\bepsilon:=\{D_\epsilon A\vx\geq 0: \vx\in \R^d\}.
\]
A simple observation is that $H_\bepsilon\subset \KK_A$ is convex for any fixed
$\bepsilon \in \{1,-1\}^m$. We assume that $(\vb_0,\vb_1,\ldots,\vb_k)$ is a
partition of $[\vb,\vb']$ such that $\vb=\vb_0<\vb_1<\cdots<\vb_k=\vb'$ and $P_{\KK_A}([\vb_{t-1},\vb_{t}])\subset H_{\epsilon_t}$ where $\epsilon_t\in
\{1,-1\}^m$, $t\in \{1,\ldots,k\}$. Since $P_{\KK_A}$ is continuous on
$[\vb,\vb']$, it means $P_{\KK_A}(\vb_t)\in H_{\epsilon_t}\cap H_{\epsilon_{t+1}}$ for
$t=1,2,\ldots,k-1$. According to $P_{\KK_A}([\vb_{t-1},\vb_{t}])\subset
H_{\epsilon_t}$, we have $P_{\KK_A}([\vb_{t-1},\vb_{t}])=
P_{H_{\epsilon_t}}([\vb_{t-1},\vb_{t}])$. Note that $H_{\epsilon_t}$ is convex. It then follows from Lemma \ref{le:leq1}  that
\[
\|P_{\KK_A}(\vb_t)-P_{\KK_A}(\vb_{t-1})\|\,\,
=\,\, \|P_{H_{\epsilon_t}}(\vb_t)-P_{H_{\epsilon_t}}(\vb_{t-1})\| \,\,\leq \,\, \|\vb_t-\vb_{t-1}\|.
\]
Thus we have
\begin{equation}\label{eq:bu1}
\begin{aligned}
\|P_{\KK_A}(\vb)-P_{\KK_A}(\vb')\|\leq \sum_{t=1}^k\|P_{\KK_A}(\vb_t)-P_{\KK_A}(\vb_{t-1})\|
\leq \sum_{t=1}^k  \|\vb_t-\vb_{t-1}\|=\|\vb-\vb'\|,
\end{aligned}
\end{equation}
where the last equation follows from $\vb_t,t=0,1,\ldots,k,$ are collinear  points.
We arrive at the conclusion.
\end{proof}

\noindent {\bf Acknowledgments.}~ Zhiqiang Xu  is most grateful to Yang Wang for
discussions and comments which are helpful for the proof of Theorem 3.1. M. Huang
acknowledges support from Yang Wang and the Department of Mathematics, The Hong Kong
University of Science and Technology.

%
%
%
%


\begin{thebibliography}{10}

\bibitem{Rima2017}
 Rima Alaifari and Philipp Grohs.
\newblock  Phase retrieval in the general setting of continuous frames for Banach spaces.
\newblock {\em SIAM journal on mathematical analysis}, 49(3):1895-1911, 2017.


\bibitem{phase1}
Radu Balan, Pete Casazza, and Dan Edidin.
\newblock On signal reconstruction without phase.
\newblock {\em Applied and Computational Harmonic Analysis}, 20(3):345--356,
  2006.

\bibitem{bandeira2014saving}
Afonso~S Bandeira, Jameson Cahill, Dustin~G Mixon, and Aaron~A Nelson.
\newblock Saving phase: Injectivity and stability for phase retrieval.
\newblock {\em Applied and Computational Harmonic Analysis}, 37(1):106--125,
  2014.


\bibitem{bunk2007diffractive}
Oliver Bunk, Ana Diaz, Franz Pfeiffer, Christian David, Bernd Schmitt, Dillip~K
  Satapathy, and J~Friso Van Der~Veen.
\newblock Diffractive imaging for periodic samples: retrieving one-dimensional
  concentration profiles across microfluidic channels.
\newblock {\em Acta Crystallographica Section A: Foundations of
  Crystallography}, 63(4):306--314, 2007.

\bibitem{Huang2021}
Jianfeng Cai, Meng Huang, Dong Li, and Yang Wang.
\newblock Solving phase retrieval with random initial guess is nearly as good as by spectral initialization.
\newblock {\em arXiv preprint} arXiv:2101.03540, 2021.



\bibitem{Jameson}
Jameson Cahill, Peter Casazza, and Ingrid Daubechies.
\newblock Phase retrieval in infinite-dimensional Hilbert spaces.
\newblock {\em Transactions of the American Mathematical Society,} Series B, 3(3) : 63-76, 2016.


\bibitem{collobert2008unified}
Ronan Collobert and Jason Weston.
\newblock A unified architecture for natural language processing: Deep neural
  networks with multitask learning.
\newblock In {\em Proceedings of the 25th international conference on Machine
  learning}, pages 160--167, 2008.

\bibitem{phase2}
Aldo Conca, Dan Edidin, Milena Hering, and Cynthia Vinzant.
\newblock An algebraic characterization of injectivity in phase retrieval.
\newblock {\em Applied and Computational Harmonic Analysis}, 38(2):346--356,
  2015.

\bibitem{best}
Frank~R Deutsch.
\newblock {\em Best approximation in inner product spaces}.
\newblock Springer Science \& Business Media, 2012.

\bibitem{du2018gradient}
Simon~S Du, Xiyu Zhai, Barnabas Poczos, and Aarti Singh.
\newblock Gradient descent provably optimizes over-parameterized neural
  networks.
\newblock {\em International Conference on Learning Representations}, 2019.


\bibitem{fienup1987phase}
C~Fienup and J~Dainty.
\newblock Phase retrieval and image reconstruction for astronomy.
\newblock {\em Image recovery: theory and application}, 231:275, 1987.

\bibitem{fienup1982phase}
James~R Fienup.
\newblock Phase retrieval algorithms: a comparison.
\newblock {\em Applied optics}, 21(15):2758--2769, 1982.

\bibitem{Gao20} B.Gao, Sun, Yang Wang and Zhiqiang Xu. \newblock Perturbed Amplitude
    Flow
    for Phase Retrieval, \newblock {\em IEEE Transactions on Signal Processing},
    68:5427-5440, 2020


\bibitem{gerchberg1972practical}
Ralph~W Gerchberg.
\newblock A practical algorithm for the determination of phase from image and
  diffraction plane pictures.
\newblock {\em Optik}, 35:237--246, 1972.

\bibitem{goel2017reliably}
Surbhi Goel, Varun Kanade, Adam Klivans, and Justin Thaler.
\newblock Reliably learning the relu in polynomial time.
\newblock In {\em Conference on Learning Theory}, pages 1004--1042. PMLR, 2017.

\bibitem{goodfellow2016deep}
Ian Goodfellow, Yoshua Bengio, Aaron Courville, and Yoshua Bengio.
\newblock {\em Deep learning}, volume~1.
\newblock MIT press Cambridge, 2016.

\bibitem{phase}
Philipp Grohs, Sarah Koppensteiner, and Martin Rathmair.
\newblock Phase retrieval: Uniqueness and stability.
\newblock {\em SIAM Review}, 62(2):301--350, 2020.

\bibitem{hand2018phase}
Paul Hand, Oscar Leong, and Vlad Voroninski.
\newblock Phase retrieval under a generative prior.
\newblock In {\em Advances in Neural Information Processing Systems}, pages
  9136--9146, 2018.

\bibitem{harrison1993phase}
Robert~W Harrison.
\newblock Phase problem in crystallography.
\newblock {\em JOSA a}, 10(5):1046--1055, 1993.

\bibitem{hazan2015beyond}
Elad Hazan, Kfir Levy, and Shai Shalev-Shwartz.
\newblock Beyond convexity: Stochastic quasi-convex optimization.
\newblock {\em Advances in neural information processing systems},
  28:1594--1602, 2015.

\bibitem{huangxu}
Meng Huang and Zhiqiang Xu.
\newblock The estimation performance of nonlinear least squares for phase
  retrieval.
\newblock {\em IEEE Transactions on Information Theory}, 66(12):7967-7977, 2020.

\bibitem{jarrett2009best}
Kevin Jarrett, Koray Kavukcuoglu, Marc'Aurelio Ranzato, and Yann LeCun.
\newblock What is the best multi-stage architecture for object recognition?
\newblock In {\em 2009 IEEE 12th international conference on computer vision},
  pages 2146--2153. IEEE, 2009.

\bibitem{kakade2011efficient}
Sham~M Kakade, Varun Kanade, Ohad Shamir, and Adam Kalai.
\newblock Efficient learning of generalized linear and single index models with
  isotonic regression.
\newblock In {\em Advances in Neural Information Processing Systems}, pages
  927--935, 2011.

\bibitem{kalai2009isotron}
Adam~Tauman Kalai and Ravi Sastry.
\newblock The isotron algorithm: High-dimensional isotonic regression.
\newblock In {\em COLT}. Citeseer, 2009.


\bibitem{mei2016landscape}
Song Mei, Yu~Bai, and Andrea Montanari.
\newblock The landscape of empirical risk for non-convex losses.
\newblock {\em arXiv preprint arXiv:1607.06534}, 2016.

\bibitem{miao2008extending}
Jianwei Miao, Tetsuya Ishikawa, Qun Shen, and Thomas Earnest.
\newblock Extending x-ray crystallography to allow the imaging of
  noncrystalline materials, cells, and single protein complexes.
\newblock {\em Annu. Rev. Phys. Chem.}, 59:387--410, 2008.

\bibitem{millane1990phase}
Rick~P Millane.
\newblock Phase retrieval in crystallography and optics.
\newblock {\em JOSA A}, 7(3):394--411, 1990.

\bibitem{netrapalli2015phase}
Praneeth Netrapalli, Prateek Jain, and Sujay Sanghavi.
\newblock Phase retrieval using alternating minimization.
\newblock {\em IEEE Transactions on Signal Processing}, 63(18):4814--4826,
  2015.

\bibitem{oymak2020towards}
Samet Oymak and Mahdi Soltanolkotabi.
\newblock Towards moderate overparameterization: global convergence guarantees
  for training shallow neural networks.
\newblock {\em IEEE Journal on Selected Areas in Information Theory}, 2020.

\bibitem{shamshad2020compressed}
Fahad Shamshad and Ali Ahmed.
\newblock Compressed sensing based robust phase retrieval via deep generative
  priors.
\newblock {\em IEEE Sensors Journal}, 2020.

\bibitem{soltanolkotabi2017learning}
Mahdi Soltanolkotabi.
\newblock Learning relus via gradient descent.
\newblock In {\em Advances in neural information processing systems}, pages
  2007--2017, 2017.


\bibitem{SRIP} V. Voroninski, Zhiqiang Xu.
\newblock A strong restricted isometry
    property,     with an application to    phaseless
    compressed sensing.
\newblock{\em   Applied Computational Harmonic Analysis},
     40(2):386--395, 2016.

\bibitem{walther1963question}
Adriaan Walther.
\newblock The question of phase retrieval in optics.
\newblock {\em Optica Acta: International Journal of Optics}, 10(1):41--49,
  1963.

\bibitem{wang2017solving}
Gang Wang, Georgios~B Giannakis, and Yonina~C Eldar.
\newblock Solving systems of random quadratic equations via truncated amplitude
  flow.
\newblock {\em IEEE Transactions on Information Theory}, 64(2):773--794, 2017.

\bibitem{phase3}
Yang Wang and Zhiqiang Xu.
\newblock Generalized phase retrieval: measurement number, matrix recovery and
  beyond.
\newblock {\em Applied and Computational Harmonic Analysis}, 47(2):423--446,
  2019.

\bibitem{xu2016structural}
Guanshuo Xu, Han-Zhou Wu, and Yun-Qing Shi.
\newblock Structural design of convolutional neural networks for steganalysis.
\newblock {\em IEEE Signal Processing Letters}, 23(5):708--712, 2016.

\bibitem{zhang2016reshaped}
Huishuai Zhang and Yingbin Liang.
\newblock Reshaped wirtinger flow for solving quadratic system of equations.
\newblock In {\em Advances in Neural Information Processing Systems}, pages
  2622--2630, 2016.

\end{thebibliography}

\end{document}